\numberwithin{equation}{section}
\newtheorem{lemma}{Lemma}[section]
\newtheorem{theorem}{Theorem}[section]
\def\NC{\hbox{\rlap{\kern.24em\raise.1ex\hbox
                  {\vrule height1.3ex width.9pt}}C}}
\def\vec{\vec}
\def\ZZ{\mathbb{Z}}
\def\RR{\mathbb{R}}
\def\PP{\mathbb{P}}
\def\d{\mathrm{d}}
\def\e{\mathrm{e}}
\def\i{\mathrm{i}}
\def\tr{\mathsf{t}}
\def\n{\boldsymbol{n}}
\def\NN{\mathbb{N}}
  \def\OHm{\accentset{\circ}H_m}
  \def\OH{\accentset{\circ}H}
\newcommand{\bs}[1]{\boldsymbol{#1}}  
 \def \endproof {\vrule height5pt width 5pt depth 0pt}
\def\vec#1{{\boldmath
\mathchoice{\hbox{$\displaystyle#1$}}{\hbox{$\textstyle#1$}} Our
problem can be stated as the following two forms
{\hbox{$\scriptstyle#1$}}{\hbox{$\scriptscriptstyle#1$}}}}
\title{Spectral-Galerkin Approximation and Optimal Error Estimate  for Stokes Eigenvalue Problems 
 in Polar Geometries\thanks{
  This work is supported in part by the National Natural Science Foundation of China grants No. 11661022, 91130014, 11471312, 91430216, 11471031, and U1530401; and by the US National Science Foundation grant DMS-1419040.}}
\author{Jing An\thanks{Beijing
Computational Science Research Center,  Beijing 100193, China ({\tt anjing@csrc.ac.cn}); and School of Mathematical Sciences, Guizhou Normal University, Guiyang 550025, China ({\tt aj154@163.com}).}
 \and Huiyuan Li\thanks{State Key Laboratory of Computer Science/Laboratory of Parallel Computing,  Institute of Software, Chinese Academy of Sciences, Beijing 100190, China ({\tt huiyuan@iscas.ac.cn}).}
\and Zhimin Zhang\thanks{Beijing
Computational Science Research Center,  Beijing 100193, China
{({\tt zmzhang@csrc.ac.cn}); and} Department of
Mathematics, Wayne State University, Detroit, MI 48202, USA
 ({\tt zzhang@math.wayne.edu}).}}
\date{}
\begin{document}

\maketitle

\begin{abstract}
In this paper we { propose and analyze spectral-Galerkin methods for the Stokes eigenvalue problem based on the} stream function formulation in polar geometries. { We first analyze the stream function} formulated  {fourth-order}  equation
under the polar coordinates, { then we} derive the pole condition and reduce the problem on a {circular} disk to a sequence of equivalent  one-dimensional eigenvalue problems that can be solved in parallel.
{ The novelty of our approach lies in the construction} of  suitably weighted Sobolev spaces according to the pole conditions{, based on which, the optimal error estimate for approximated eigenvalue of each one dimensional problem can be obtained}.
Further, { we extend our method to the} non-separable Stokes eigenvalue {problem}
in an {elliptic domain and establish the optimal error bounds.}
   Finally, we provide some numerical experiments to validate {our} theoretical results and algorithms.

\vskip 5pt \noindent {\bf Keywords:} {Stokes eigenvalue problem, polar geometry, pole condition, spectral-Galerkin approximation, optimal error analysis}


\end{abstract}

\section{Introduction}\label{int}
\label{Sec:Intro}
We consider in this paper  the Stokes eigenvalue problem which arises
{in stability analysis of the stationary solution} of the Navier-Stokes equations \cite{O76}:
\begin{align}
&-\Delta\bs u+\nabla p=\lambda\bs u,&&\text{in}~~\Omega ,\label{eqn:eigensys1} \\
&\nabla\cdot \bs u=0, &&\text{in}~~\Omega,\label{eqn:eigensys2}\\
&  \bs u=0,&&\text{on}~~\partial\Omega ,\label{eqn:eigensys3}
\end{align}
where $ \bs u=(u_1,u_2)$ is the flow velocity, $p$ is the pressure, $\Delta$ is the Laplacian operator, $\Omega$  is the flow domain and
$\partial\Omega$ denotes the boundary of the flow domain $\Omega$.

Let us introduce the stream function $\psi$ such that $\bs u=(\partial_y\psi,-\partial_x\psi)$.
Then we derive an alternative formulation
for  \eqref{eqn:eigensys1}-\eqref{eqn:eigensys3}:
\begin{align}
&-\Delta^2 \psi=\lambda \Delta\psi,&&\text{in}~~\Omega ,\label{eqn:eigensys4} \\
& \psi=\frac{\partial\psi}{\partial \n}=0,&&\text{on}~~\partial\Omega ,\label{eqn:eigensys5}
\end{align}
where $\n$ is the unit outward normal to the boundary $\partial\Omega$.  \eqref{eqn:eigensys4}  is also referred to as the 
biharmonic eigenvalue problem for plate buckling.
The  naturally equivalent weak form of \eqref{eqn:eigensys4}-\eqref{eqn:eigensys5} reads: Find $(\lambda, \psi)\in \RR\times H_0^2(\Omega)$ such that
\begin{align}
\mathcal{A}(\psi,\phi)
  = \lambda  \mathcal{B}(\psi,\phi), 
\quad \phi\in H^2_0(\Omega),\label{weak0}
\end{align}
where the bilinear forms  $\mathcal{A}$ and $\mathcal{B}$ are defined by
\begin{align*}
&\mathcal{A}(\psi,\phi) =(\Delta \psi, \Delta \phi) = \int_{\Omega} \Delta\psi \Delta \overline{\phi}\, \d x \d y,
\\
&\mathcal{B}(\psi,\phi) =(\nabla \psi, \nabla \phi) = \int_{\Omega} \nabla\psi \cdot \nabla \overline{\phi}\, \d x \d y.
\end{align*}

There are various numerical approaches to solving  \eqref{eqn:eigensys4}-\eqref{eqn:eigensys5}. Mixed finite element methods introduce  the auxiliary function $w=\Delta \psi$ to reduce the fourth-order equation 
to a saddle point problem   and then discretize the reduced  second order equations with ($C^0$-) continuous finite elements\cite{C78,MORR81,CL06,YJ13}. However, spurious solutions may occur in some situations. 
The conforming finite element methods including  Argyris  elements \cite{Argyris68} and the partition of unity finite elements \cite{Davis14}, require globally continuously differentiable finite element spaces, which are difficult to construct and implement.
The third type of approaches use non-conforming finite element methods, such as Adini elements \cite{AdiniClough68}, Morley elements \cite{Morley68,Rannacher79,Shi90} and the ordinary  $C^0$-interior penalty Galerkin method \cite{BreMonSun15}.   Their disadvantage lies in that such elements do not come in a natural hierarchy.
Both the  conforming and nonconforming finite element methods   are based on the naturally equivalent variational formulation \eqref{weak0}, and usually  involve low order polynomials and  guarantee only a low order of convergence.

In contrast, { it is observed in \cite{zhang} that the spectral method, whenever it is applicable, has tremendous advantage over the traditional $h$-version methods.}
  In particular, spectral  and spectral element methods { using} high order orthogonal polynomials for {fourth-order} equations result in an
exponential order of convergence for smooth solutions \cite{shen1997,BjTj97,Bialecki01,Guo05,YuGuo14,GuoYu16,ChenAnZhuang16}.
In analogy to the Argyris finite element methods, the conforming spectral element method requires globally continuously differentiable element spaces, which are extremely difficult to construct and implement on unstructured (triangular or quadrilateral) meshes.
This is exactly the reason why $C^1$-conforming spectral elements are rarely reported in literature except  those on rectangular meshes \cite{YuGuo14}. Hence, the spectral methods using globally smooth basis functions are naturally suitable choices  in practice
for 
 \eqref{weak0} on some fundamental regions including  rectangles, triangles and polar geometries.

To the best of our knowledge there are few reports on spectral-Galerkin approximation  for the Stokes eigenvalue problem by
the stream function formulation in polar geometries.
The polar transformation introduces  polar singularities  and  variable coefficients of the form $r^{\pm m}$ in polar coordinates \cite{shen1997,Bel01},
which involves  intricate pole conditions thus brings forth severe difficulties in both the design of  approximation schemes  and the corresponding error analysis. The aim of the current paper is  to  propose {and analyze} an efficient spectral-Galerkin approximation for the stream function formulation of the Stokes eigenvalue {problem in polar geometries}. As the first step,  we {use the} separation of variables in polar coordinates {to reduce} the original  problem in the unit disk to { equivalent infinite} sequence of one-dimensional eigenvalue problems which can be solved individually in parallel.
{Rigorous}  pole conditions involved are prerequisite for  the equivalence of the original problem and {the sequence of} the one-dimensional eigenvalue problems,  and thus play a fundamental role in our further study.  It is worthy to note, however,  that  the pole conditions derived for the {fourth-order} source problems in open literature  (such as \cite{shen1997,Bel01})  are
inadequate for our eigenvalue problems  since they would inevitably  induce improper/spurious computational results.

{ Based on the pole condition, suitable  approximation spaces are  introduced  and spectral-Galerkin schemes} are proposed.  A rigorous analysis on the optimal error estimate in certain properly introduced weighted Sobolev spaces   is made for each one dimensional eigenvalue problem  by using the minimax principle. { Finally, we extend our spectral-Galerkin method} to solving the  stream function formulation of the Stokes eigenvalue { problem in an elliptic} region.  Owing to its {non-separable property}, this problem is actually another challenge both in computation and analysis.    A brief explanation on the implementation of the approximation scheme  is first given, and an optimal error estimate is then presented in {the} Cartesian coordinates {under} the framework of  Bab\v{u}ska and Osborn \cite{babuska1991}.

The rest of this paper is organized as follows. In the next section, dimension reduction scheme of the Stokes eigenvalue problem  is presented.
In \S3, we derive the weak formulation and prove the error estimation for a sequence of equivalent one-dimensional eigenvalue problems.
Also, we describe the details for an efficient implementation of the algorithm.
In \S4, we extend our algorithm to the case of elliptic region.
We present several numerical experiments in \S5 to demonstrate the accuracy and efficiency of our method.
Finally, in \S6 we give some concluding remarks.

\section{Dimensionality reduction and pole conditions}\label{Sec:Dimen}

Before coming to the main body of this section, we would like to introduce some notations and conventions which will be used throughout the paper.
Let $\omega$ be a generic positive weight function on a bounded domain $\Omega$, which is not necessarily in $L^1(\Omega)$. Denote by $(u,v)_{\omega,\Omega}$ the inner product of  $L^2_{\omega}(\Omega)$ whose norm is denoted by 
$\|\cdot\|_{\omega,\Omega}$. We use $H^m_{\omega}(\Omega)$ and $H^{m}_{0,\omega}(\Omega)$ to denote
the usual weighted Sobolev spaces, whose norm is denoted by
$\|\cdot\|_{m,\omega,\Omega}$. In cases where no confusion would arise, $\omega$ (if $\omega=1$) and $\Omega$
 may be dropped from the notation.
 Let $\NN_0$ (resp. $\ZZ$) be the collection of nonnegative integers (resp. integers).
For $N\in \NN_0$, we denote  by $\PP_N(\Omega)$ the collection of all  algebraic polynomials on $\Omega$ with the total degree no greater than $N$.
 We denote by $c$ a generic positive constant independent of any function and of any discretization parameters. We use the expression $A\lesssim B$ to mean that $A\le c B$.

In the current section, 
we restrict our attention to the unit disk $\Omega=D:=\{(x,y)\in \RR^2: x^2+y^2<1 \}$. 
We shall  employ a classical technique, separation of variables, to reduce the problem to a sequence of equivalent one-dimensional problems.

Throughout this paper, we shall use the polar coordinates $(r,\theta)$ for points in the disk $D$ such that $(x,y) = (r\cos \theta,r\sin \theta)$. We associate any function $u(x,y)$ in Cartesian coordinates with its  partner $\tilde u(r,\theta)=u(r\cos\theta,r\sin\theta)$
in polar coordinates.
If no confusion would arise, we shall 
use the same notation $u$ for $u(x,y)$ and $\tilde u(r,\theta)$.
We now  recall that, under the polar coordinates, 
\begin{align}
\Delta= \frac{1}{r} \frac{\partial}{\partial r}\left( r \frac{\partial}{\partial r}\right)+\frac{1}{r^2}
\frac{\partial^2}{\partial \theta^2},
\qquad \nabla = \big(\cos \theta \frac{\partial}{\partial r}- \frac{ \sin \theta}r \frac{\partial}{\partial \theta}, \sin \theta \frac{\partial}{\partial r}+ \frac{ \cos \theta}r \frac{\partial}{\partial \theta}   \big)^{\tr}.
 \label{a2.1}
\end{align}
Then the bilinear forms  $\mathcal{A}$ and $\mathcal{B}$  in \eqref{weak0} become
\begin{align*}
&\mathcal{A}(\psi, \phi)  =\int_{0}^1r  \d r \int_0^{2\pi}
\Big[\frac{\partial^2 \psi}{\partial r^2}  +\frac{1}{r} \frac{\partial \psi}{\partial r} +\frac{1}{r^2}
\frac{\partial^2 \psi}{\partial \theta^2}\Big]  \Big[\frac{\partial^2 \overline{\phi}}{\partial r^2}  +\frac{1}{r} \frac{\partial \overline{\phi}}{\partial r} +\frac{1}{r^2}
\frac{\partial^2 \overline{\phi}}{\partial \theta^2}\Big] \d \theta,
\\
 & \mathcal{B}(\psi, \phi)  =  \int_{0}^1r  \d r \int_0^{2\pi}
\Big[  \frac{\partial \psi}{\partial r}  \frac{\partial \overline{\phi}}{\partial r}   +   \frac{1}{r^2} \frac{\partial \psi}{\partial \theta}  \frac{\partial \overline{\phi}}{\partial \theta}  \Big] \d \theta.
\end{align*}

Denote $I=(0,1)$ and 
define the bilinear forms for functions $u,v$  on  $I$,
\begin{align*}
   &\mathcal{A}_m(u,v) = \int_0^1
\Big[ u'' +  \frac{u'}{r}- \frac{m^2 }{r^2} u\Big] \Big[ \overline{v}'' +   \frac{\overline{v}'}{r} -\frac{m^2}{r^2}\overline{v}  \Big] r \d r,
\\
   &\mathcal{B}_m(u,v) = \int_0^1
\Big(r u' \overline{v}'   +   \frac{m^2}{r} u \overline{v}  \Big) \d r.
\end{align*}
Further let us assume 
\begin{align}
\psi=\sum_{m\in \ZZ} \psi_m (r)\e^{\i m\theta},\qquad \phi = \sum_{m\in \ZZ}\phi_m (r) \e^{\i m\theta}. \label{expand}
\end{align}
By the orthogonality of  the Fourier system $\{\e^{\i m \theta} \}$, one finds that
\begin{align*}
\mathcal{A}(\psi, \phi) = 2\pi\, \sum_{m\in \ZZ} \mathcal{A}_m(\psi_m,\phi_m),
\qquad 
\mathcal{B}(\psi, \phi)  = 2\pi\,\sum_{m\in \ZZ}  \mathcal{B}_m(\psi_m,\phi_m).
\end{align*}

For the well-posedness of  $\mathcal{B}_m(\psi_m, \phi_m)$ and  $\mathcal{A}_m(\psi_m, \phi_m)$, the following pole conditions  for $\psi_m$  (and the same type of  pole conditions  for $\phi_m$) should be imposed,
\begin{align}
\label{pole}
 & m \psi_m(0) = 0,    \qquad 
 \lim_{r\to 0+}\Big[\psi_m'(r) - \frac{m^2}{r} \psi_m(r)\Big] = (1-m^2) \psi_m'(0) =0,
\end{align}
which can be further simplified into the following three categories,
\begin{align}
&(1). \quad \psi_m'(0)=0,  && m=0;\label{cond3}\\
&(2). \quad \psi_m(0)=0,   && |m|=1;\label{cond4}\\
&(3). \quad \psi_m(0)=\psi_m'(0)=0, && |m|\geq2.\label{cond5}
\end{align}
It is worthy to note that  our pole condition  \eqref{cond4} for  $|m|=1$
is a  revision of the pole condition   $\psi_m'(0)=\psi_m(0)=0$  in  (4.8) of \cite{shen1997}.
A concrete example to support the absence of $\psi_{\pm 1}'(0)=0$ reads,
$$
\psi = \psi_{\pm 1}(r) \e^{\pm \i \theta} \in H^2_0(D),  \qquad \psi_{\pm 1}(r) = (1-r)^2 r.
$$
Also, this absence of $\psi_{\pm 1}'(0)=0$ in \eqref{cond4} is also confirmed 
by \cite{Boyd89}.

The boundary conditions $\psi=\partial_{\n}\psi=0$ on $\partial D$ states $\psi_m'(1)=\psi_m(1)=0$ for all integer $m$.
Meanwhile,  $\mathcal{A}_m(\psi_m,\psi_m)=0$ together with $\psi_m'(1)=\psi_m(1)=0$ implies $\psi_m\equiv0$.
It is then easy to verify that   $\sqrt{\mathcal{A}_m(\psi_m,\psi_m)}$  (resp. $\sqrt{\mathcal{B}_m(\psi_m,\psi_m)}$\,)
induces a Sobolev norm for any function $\psi_m$  on $I$ which satisfies
the boundary condition $\psi_m'(1)=\psi_m(1)=0$ (resp. $\psi_m(1)=0$)  and the pole condition $
(1-m^2)\psi_m'(0)=m\psi_m(0)=0$ (resp. $m\psi_m(0)=0$).
 
We now  introduce two non-uniformly weighted Sobolev spaces on $I$,
\begin{align}
  \OHm^1 (I):= \big\{u:\  &\mathcal{B}_m(u,u) <\infty, 
\  mu(0) =   u(1)=0\big\},
\\
 \OHm^2(I):= \big\{u:\  &\mathcal{A}_m(u,u) <\infty, 
\  mu(0) = (1-m^2)u'(0)=   u(1)=u'(1)=0 \big\},
\end{align}
which are endowed with  energy norms
\begin{align}
\big\|u\big\|_{1,m,I}= \sqrt{ \mathcal{B}_m(u,u)}, \qquad 
\big\|u\big\|_{2,m,I}=  \sqrt{ \mathcal{A}_m(u,u)}.
\end{align}

In the sequel, \eqref{weak0}
is reduced to a system of  infinite  one-dimensional eigen problems: 
 to find $(\lambda_m,\psi_m)\in \RR\times\OHm^2(I)$ such that $\|\psi_m\|_{1,m,I}=1 $ and 
\begin{align}
\label{weak2}
\mathcal{A}_m(\psi_m,\phi_m) = \lambda_m  \mathcal{B}_m(\psi_m,\phi_m) , \quad \phi_m\in  \OHm^2(I), \quad m\in \ZZ.
\end{align}

We now conclude this section with the following  lemma on $\mathcal{A}_m(\cdot,\cdot)$ and $\mathcal{B}_m(\cdot,\cdot)$.

\begin{lemma} For $u,v\in \OHm^2(I)$,
\begin{align}
\label{buv0}
\begin{split}
\mathcal{B}_m(u,v)=&\,\int_{0}^1 \Big(u'\pm \frac{m}{r} u \Big)   \Big( \overline{v}' \pm \frac{m}{r}  \overline{v} \Big) r \d r,
 \end{split}
 \\
\label{auv0}
\begin{split}
\mathcal{A}_m(u,v)=&\,\int_{0}^1\Big[ r \Big(u'\mp\frac{m}{r} u\Big)'
\Big(\overline{v}'\mp\frac{m}{r} \overline{v}\Big)'+  \frac{(1\pm m)^2}{r}   \Big(u'\mp\frac{m}{r} u\Big)
 \Big(\overline{v}'\mp\frac{m}{r} \overline{v}\Big)  \Big]  \d r.
 \end{split}
\end{align}
\end{lemma}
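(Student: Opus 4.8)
The plan is to deduce both identities from a single first–order factorization of the radial operator together with one integration by parts whose boundary contributions are annihilated by the pole and boundary conditions encoded in $\OHm^2(I)$.

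First I would record the factorization of $L_mu:=u''+u'/r-(m^2/r^2)u$: a direct differentiation shows that, with the signs chosen consistently (upper with upper, lower with lower),
\[
L_mu=\Big(u'\mp\frac{m}{r}u\Big)'+\frac{1\pm m}{r}\Big(u'\mp\frac{m}{r}u\Big).
\]
Write $p:=u'\mp\frac{m}{r}u$ and $q:=v'\mp\frac{m}{r}v$. For \eqref{auv0} I substitute this into $\mathcal{A}_m(u,v)=\int_0^1(L_mu)(\overline{L_mv})\,r\,\d r$, expand the product of the two binomials, and collect the cross term $(1\pm m)\int_0^1(p'\overline q+p\overline q')\,\d r=(1\pm m)\int_0^1(p\overline q)'\,\d r=(1\pm m)\big[p\overline q\big]_0^1$; what is left over is exactly $\int_0^1\!\big[\,r\,p'\overline q'+(1\pm m)^2\,p\overline q/r\,\big]\,\d r$, the claimed right–hand side. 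The proof of \eqref{buv0} is the same computation but shorter, and works for either choice of sign: expanding its right–hand side gives $\mathcal{B}_m(u,v)\pm m\int_0^1(u'\overline v+u\overline v')\,\d r=\mathcal{B}_m(u,v)\pm m\big[u\overline v\big]_0^1$.

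It then suffices to show the emerging boundary terms vanish. At $r=1$ this is immediate from $u(1)=v(1)=0$, $u'(1)=v'(1)=0$: then $p(1)=u'(1)\mp m u(1)=0$ and likewise $q(1)=0$, so neither $\big[u\overline v\big]$ nor $\big[p\overline q\big]$ contributes at $r=1$. At $r=0$, the term $m\,u(0)\overline v(0)$ in \eqref{buv0} is zero since $m u(0)=0$ belongs to the definition of $\OHm^2(I)$; for \eqref{auv0} one needs $(1\pm m)\,p(0)\,\overline q(0)=0$. If $m=0$, then $p(0)=u'(0)$ and $(1\pm m)p(0)=(1-m^2)u'(0)=u'(0)=0$. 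If $m\neq0$, then $u(0)=0$ forces $u(r)/r\to u'(0)$, hence $p(0)=(1\mp m)u'(0)$ and $(1\pm m)p(0)=(1\pm m)(1\mp m)u'(0)=(1-m^2)u'(0)=0$. So uniformly $(1\pm m)p(0)=(1-m^2)u'(0)=0$ by the pole condition, and the boundary contribution at $r=0$ disappears; this is precisely the point at which the revised pole conditions \eqref{cond3}--\eqref{cond5} (in particular the \emph{absence} of $u'(0)=0$ when $|m|=1$, where $(1\mp m)$ or $(1\pm m)$ carries the vanishing) are used.

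The one delicate point — and the main obstacle — is legitimacy: the values $u(0),u'(0)$, the one–sided limit $p(0)$, and the separate finiteness of $\int_0^1 r|p'|^2\,\d r$ and $\int_0^1|p|^2/r\,\d r$ must be available for a general $u\in\OHm^2(I)$, not only for classically smooth profiles. I would handle this by first proving the identities for $u,v$ smooth on $[0,1]$ satisfying the boundary and pole conditions, where every step above is elementary, and then passing to the limit: both sides of \eqref{buv0} (resp. \eqref{auv0}) are continuous in the energy norm $\|\cdot\|_{1,m,I}$ (resp. $\|\cdot\|_{2,m,I}$), and such smooth functions are dense in the corresponding space. Establishing that density — equivalently, the Hardy–type inequalities showing that $\int_0^1\big(r|p'|^2+|p|^2/r\big)\,\d r$ is controlled by $\mathcal{A}_m(u,u)$, so that the split norm on the right of \eqref{auv0} is equivalent to $\|\cdot\|_{2,m,I}$ — is the genuinely technical ingredient; granting it, the two identities follow by continuity. \endproof
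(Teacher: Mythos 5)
Your proposal is correct and follows essentially the same route as the paper: the same first-order factorization $u''+u'/r-m^2u/r^2=\big(u'\mp\tfrac{m}{r}u\big)'+\tfrac{1\pm m}{r}\big(u'\mp\tfrac{m}{r}u\big)$, the same identification of the cross term as the integral of an exact derivative, and the same use of the pole and boundary conditions to annihilate the endpoint contributions. The only difference is that you spell out the endpoint limits (in particular $(1\pm m)p(0)=(1-m^2)u'(0)=0$) and flag the density/regularity issue needed to pass from smooth functions to general elements of $\OHm^2(I)$, a point the paper's proof treats as implicit.
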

\begin{proof}
By integration by parts and the pole condition \eqref{pole}, one verifies that
\begin{align*}
  \int_{0}^1 \Big(u'\pm &\frac{m}{r} u \Big)   \Big( \overline{v}' \pm \frac{m}{r}  \overline{v} \Big) r \d r
  =  \int_{0}^1 \Big (r  u'   \overline{v}'+  \frac{m^2}{r} u \overline{v}' \Big) \d r  
  \pm m \int_{0}^1   \big( u  \overline{v} \big)'  \d r
  \\
  =&\,  \int_{0}^1 \Big (r  u'   \overline{v}'+  \frac{m^2}{r} u \overline{v}' \Big) \d r ,
\end{align*}
which gives \eqref{buv0}.

Next, one readily checks that
\begin{align*}
& u'' +  \frac{u'}{r}- \frac{m^2 u}{r^2}
 =  \Big(u'\mp\frac{m}{r} u\Big)' + \frac{1\pm m}{r} \Big(u'\mp\frac{m}{r} u\Big)
 ,
 \\
& v'' +  \frac{v'}{r}- \frac{m^2 v}{r^2}
 =  \Big(v'\mp\frac{m}{r} u\Big)' + \frac{1\pm m}{r} \Big(v'\mp\frac{m}{r} v\Big)
.
\end{align*}
As a result,
\begin{align*}
\mathcal{A}_m(u,v)=&\int_{0}^1\Big[ \Big(u'\mp\frac{m}{r} u\Big)'  +\frac{1\pm m}{r}\Big(u'\mp\frac{m}{r} u\Big) \Big]
\Big[\Big(\overline{v}'\mp\frac{m}{r} \overline{v}\Big)' + \frac{1\pm m}{r}\Big(\overline{v}'\mp\frac{m}{r} \overline{v}\Big) \Big] r\d r
\\
=&\int_{0}^1\Big[ r  \Big(u'\mp\frac{m}{r} u\Big)' \Big(\overline{v}'\mp\frac{m}{r} \overline{v}\Big)' + \frac{(1\pm m)^2}{r}  \Big(u'\mp\frac{m}{r} u\Big) \Big(\overline{v}'\mp\frac{m}{r} \overline{v}\Big)\Big]\d r 
\\&+\int_{0}^1  (1\pm m) \Big[\Big(u'\mp\frac{m}{r} u\Big)\Big(\overline{v}'\mp\frac{m}{r} \overline{v}\Big)\Big]'\d r
.\end{align*}
Meanwhile, the pole conditions \eqref{cond3}-\eqref{cond5}  states that  both $(1\pm m) \big(u'\mp\frac{m}{r} u\big)$
and  $(1\pm m) \big(v'\mp\frac{m}{r} v\big)$ vanish at the  two endpoints of $I$. Thus 
the last integral above is zero, and  \eqref{auv0} is now proved.
\end{proof}

\section{Spectral Galerkin approximation and  its error estimates}

Let $P_N(I)$ be the space of polynomials of degree less than or equal to $N$ on $I$, and setting $X_N^m=\PP_N(I)\cap  \OHm^2(I)$.
Then the spectral Galerkin approximation scheme to \eqref{weak2} is: Find $(\lambda_{mN},\psi_{mN})\in \RR\times X_N^m$ such that
$\|\psi_{mN}\|_{1,m}=1$ and 
\begin{align}
\mathcal{A}_m(\psi_{mN},v_N)=\lambda_{mN}\,\mathcal{B}_m(\psi_{mN},v_N),\qquad  \forall v_N\in  X_N^m.\label{weak3}
\end{align}
Due to the  symmetry  properties $\mathcal{A}_m=\mathcal{A}_{-m}$ and $\mathcal{B}_m=\mathcal{B}_{-m}$, we shall only consider $m\in \NN_0$
from now on in this section.
\subsection{Mini-max principle}

To give the error analysis, we will use extensively the minimax principle.
\begin{lemma} Let $\lambda_{m}^{l}$ denote the eigenvalues of (\ref{weak2}) and $V_{l}$ be any $l$-dimensional
subspace of $\OHm^2(I) $. Then, for $\lambda_m^1\leq \lambda_m^2 \leq \cdots \leq \lambda_m^l \leq \cdots $, there holds
\begin{eqnarray}\label{e3.3}
\lambda_m^l=\min_{V_l\subset \OHm^2(I) } \max_{v\in V_l}\frac{\mathcal{A}_m(v,v)}{\mathcal{B}_m(v,v)}.
\end{eqnarray}
\end{lemma}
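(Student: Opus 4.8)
The plan is the classical Courant--Fischer (minimax) argument; the only genuine input is the spectral structure of \eqref{weak2}. \emph{Step 1 (functional setting).} The forms $\mathcal{A}_m(\cdot,\cdot)$ and $\mathcal{B}_m(\cdot,\cdot)$ are Hermitian and, by the discussion preceding the lemma, they are the inner products generating the norms $\|\cdot\|_{2,m,I}$ on $\OHm^2(I)$ and $\|\cdot\|_{1,m,I}$ on $\OHm^1(I)$; moreover $\OHm^2(I)$ is continuously and densely embedded in $\OHm^1(I)$, and this embedding is compact. Hence the solution operator $S:\OHm^1(I)\to\OHm^2(I)\hookrightarrow\OHm^1(I)$ defined by $\mathcal{A}_m(Su,v)=\mathcal{B}_m(u,v)$ for all $v\in\OHm^2(I)$ is well defined by Lax--Milgram, and is compact, self-adjoint and positive with respect to $\mathcal{B}_m(\cdot,\cdot)$. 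The spectral theorem then produces eigenvalues $0<\lambda_m^1\le\lambda_m^2\le\cdots\to\infty$ of \eqref{weak2} together with eigenfunctions $\{u_j\}_{j\ge1}$ forming a $\mathcal{B}_m$-orthonormal basis of $\OHm^1(I)$ with $\mathcal{A}_m(u_i,u_j)=\lambda_m^i\delta_{ij}$; these are the objects entering the statement.

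\emph{Step 2 (upper bound).} Take $V_l=\mathrm{span}\{u_1,\dots,u_l\}\subset\OHm^2(I)$. For $v=\sum_{j=1}^lc_ju_j\in V_l$,
\begin{align*}
\frac{\mathcal{A}_m(v,v)}{\mathcal{B}_m(v,v)}=\frac{\sum_{j=1}^l\lambda_m^j|c_j|^2}{\sum_{j=1}^l|c_j|^2}\le\lambda_m^l ,
\end{align*}
so $\max_{v\in V_l}\mathcal{A}_m(v,v)/\mathcal{B}_m(v,v)=\lambda_m^l$ on this particular $V_l$, whence the right-hand side of \eqref{e3.3} is $\le\lambda_m^l$.

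\emph{Step 3 (lower bound).} Let $V_l\subset\OHm^2(I)$ be any $l$-dimensional subspace. The subspace of $V_l$ on which the $l-1$ linear functionals $v\mapsto\mathcal{B}_m(v,u_j)$, $j=1,\dots,l-1$, all vanish has dimension at least $1$, so it contains some $v\ne0$; set $c_j:=\mathcal{B}_m(v,u_j)$, so $c_1=\dots=c_{l-1}=0$. Since $v\in\OHm^2(I)$, testing the eigen-relation for $u_j$ against $v$ gives $\mathcal{A}_m(v,u_j)=\lambda_m^jc_j$, hence $v-\sum_{j=1}^Nc_ju_j$ is $\mathcal{A}_m$-orthogonal to $u_1,\dots,u_N$ and $\sum_{j=1}^N\lambda_m^j|c_j|^2\le\mathcal{A}_m(v,v)$ for every $N$. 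Letting $N\to\infty$, using $\lambda_m^j\ge\lambda_m^l$ for $j\ge l$ and Parseval's identity $\mathcal{B}_m(v,v)=\sum_{j\ge l}|c_j|^2$ in $\OHm^1(I)$,
\begin{align*}
\mathcal{A}_m(v,v)\ge\sum_{j\ge l}\lambda_m^j|c_j|^2\ge\lambda_m^l\sum_{j\ge l}|c_j|^2=\lambda_m^l\,\mathcal{B}_m(v,v) ,
\end{align*}
so $\max_{v\in V_l}\mathcal{A}_m(v,v)/\mathcal{B}_m(v,v)\ge\lambda_m^l$. As $V_l$ was arbitrary, the minimum in \eqref{e3.3} is $\ge\lambda_m^l$; combined with Step 2 this proves \eqref{e3.3}, the minimum being attained at $\mathrm{span}\{u_1,\dots,u_l\}$.

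The computations in Steps 2--3 are routine; the point that needs care is the underpinning of Step 1, namely the compact and dense embedding $\OHm^2(I)\hookrightarrow\OHm^1(I)$ for these non-uniformly weighted spaces --- in particular controlling the strongly singular weights (the $r^{-3}$-type term implicit in $\mathcal{A}_m$) near the pole, consistently with the $m$-dependent pole conditions \eqref{cond3}--\eqref{cond5}. It is exactly this compactness that guarantees the discrete spectrum and the completeness of $\{u_j\}$ in $\OHm^1(I)$ on which both halves of the minimax argument rely, so I would either prove it directly from the weighted-space construction or invoke it as an already established property of the spaces $\OHm^1(I),\OHm^2(I)$.
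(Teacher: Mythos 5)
Your argument is the classical Courant--Fischer proof and its computations (Steps 2--3) are correct; note that the paper does not actually prove this lemma but simply cites Theorem 3.1 of Min and Gottlieb \cite{mg2003}, which is established by exactly this argument, so you are supplying the proof the paper outsources. The one substantive ingredient, which you rightly isolate, is the compact and dense embedding $\OHm^2(I)\hookrightarrow\OHm^1(I)$: without it there is no discrete spectrum and no Parseval identity in Step 3. This does hold here and is not hard to extract from the paper's own identities: by \eqref{buv0} and \eqref{auv0}, setting $w=u'\mp\frac{m}{r}u$ one has $\mathcal{B}_m(u,u)=\|w\|^2_{\omega^{0,1},I}$ and $\mathcal{A}_m(u,u)=\|w'\|^2_{\omega^{0,1},I}+(1\pm m)^2\|w\|^2_{\omega^{0,-1},I}$, so the embedding question reduces to the compactness of a weighted one-dimensional $H^1\hookrightarrow L^2$ embedding for $w$ (with the boundary/pole conditions on $w$ recorded after \eqref{auv0}), which follows from Hardy's inequality and the standard Rellich theorem on $(\,\epsilon,1)$ plus a uniform smallness estimate near $r=0$. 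If you add a sentence to that effect, your Step 1 is fully justified and the proof is complete; everything else is routine and matches the standard treatment.
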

\begin{proof}See Theorem 3.1 in \cite{mg2003}. \end{proof}

\begin{lemma}  Let $\lambda_m^i$ denote the eigenvalues of (\ref{weak2}) and
be arranged in an ascending order, and define
\begin{eqnarray*}
E_{i,j}= \hbox{\rm span}\left\{\psi_m^i,\cdots,\psi_m^j\right\},
\end{eqnarray*}
where $\psi_m^i$ is the eigenfunction corresponding to the eigenvalue $\lambda_m^i$. Then we have
\begin{eqnarray}\label{e3.4}
\lambda_m^l= \max_{v\in E_{k,l}}\frac{\mathcal{A}_m(v,v)}{\mathcal{B}_m(v,v)} && k\leq l,  \\\label{e3.5}
\lambda_m^l= \min_{v\in E_{l,m}}\frac{\mathcal{A}_m(v,v)}{\mathcal{B}_m(v,v)} && l\leq m .
\end{eqnarray}
\end{lemma}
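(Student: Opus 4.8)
The idea is to use the spectral structure of \eqref{weak2}: the eigenfunctions $\{\psi_m^i\}$ can be chosen so that they are $\mathcal{B}_m$-orthonormal and simultaneously $\mathcal{A}_m$-orthogonal, after which the Rayleigh quotient $\mathcal{A}_m(v,v)/\mathcal{B}_m(v,v)$ restricted to each of the finite-dimensional spaces $E_{k,l}$ or $E_{l,n}$ collapses to a ratio of finite sums that is controlled purely by the ordering of the eigenvalues.

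First I would establish the orthogonality relations. Since $\mathcal{A}_m$ and $\mathcal{B}_m$ are symmetric and $\mathcal{B}_m(\cdot,\cdot)$ is an inner product on $\OHm^2(I)$ (as observed just before \eqref{weak2}), testing \eqref{weak2} for the pair $(\lambda_m^i,\psi_m^i)$ against $\psi_m^j$ and, symmetrically, for $(\lambda_m^j,\psi_m^j)$ against $\psi_m^i$ gives $\lambda_m^i\,\mathcal{B}_m(\psi_m^i,\psi_m^j)=\mathcal{A}_m(\psi_m^i,\psi_m^j)=\lambda_m^j\,\mathcal{B}_m(\psi_m^i,\psi_m^j)$, whence $\mathcal{B}_m(\psi_m^i,\psi_m^j)=0$ whenever $\lambda_m^i\neq\lambda_m^j$; within an eigenspace belonging to a repeated eigenvalue one applies Gram--Schmidt in the $\mathcal{B}_m$ inner product. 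We may therefore normalize so that $\mathcal{B}_m(\psi_m^i,\psi_m^j)=\delta_{ij}$, and then automatically $\mathcal{A}_m(\psi_m^i,\psi_m^j)=\lambda_m^i\,\delta_{ij}$.

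With this in hand, for any $0\neq v\in E_{k,l}$ with $k\le l$ I write $v=\sum_{i=k}^{l}c_i\psi_m^i$; then
\[
\frac{\mathcal{A}_m(v,v)}{\mathcal{B}_m(v,v)}=\frac{\sum_{i=k}^{l}\lambda_m^i|c_i|^2}{\sum_{i=k}^{l}|c_i|^2},
\]
which is a convex combination of $\lambda_m^k,\dots,\lambda_m^l$ and hence is $\le\lambda_m^l$, with equality attained at $v=\psi_m^l\in E_{k,l}$; this yields \eqref{e3.4}. Identity \eqref{e3.5} follows by the identical computation on $E_{l,n}$ (with $l\le n$), where the same ratio is now a convex combination of $\lambda_m^l,\dots,\lambda_m^n$, hence $\ge\lambda_m^l$, again with equality at $v=\psi_m^l$. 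The only point requiring any care — the nearest thing to a genuine obstacle in an otherwise routine argument — is the orthogonalization step when $\lambda_m^l$ is a multiple eigenvalue; this is dispatched by the Gram--Schmidt remark above, and notably no completeness of the eigensystem in $\OHm^2(I)$ is needed, since the spaces $E_{k,l}$ and $E_{l,n}$ are finite-dimensional. Alternatively, one could derive both identities directly from the preceding minimax lemma by exploiting that $E_{l,n}$ is invariant under the solution operator associated with \eqref{weak2}.
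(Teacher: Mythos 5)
Your argument is correct. Note that the paper does not actually prove this lemma at all --- it simply cites Lemma 3.2 of Min and Gottlieb \cite{mg2003} --- so you are supplying the standard argument that the cited reference contains: $\mathcal{B}_m$-orthogonality of eigenfunctions across distinct eigenvalues from the symmetry of the two forms, Gram--Schmidt within repeated eigenspaces, and then the observation that the Rayleigh quotient on $E_{k,l}$ is a convex combination of $\lambda_m^k,\dots,\lambda_m^l$, with the extremum attained at $\psi_m^l$. Your remarks that completeness of the eigensystem is not needed and that the only delicate point is the multiple-eigenvalue case are both accurate; your renaming of the upper index in $E_{l,m}$ to $n$ also quietly fixes an unfortunate clash in the paper's statement, where $m$ is simultaneously the Fourier mode and the span index. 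Nothing is missing.
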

\begin{proof} See Lemma 3.2 in \cite{mg2003}. \end{proof}

It is true that the minimax principle is also valid for the discrete formulation (\ref{weak3}) (see \cite{mg2003}).

\begin{lemma} {\it Let $\lambda_{mN}^l$ denote the eigenvalues of (\ref{weak3}), and $V_l$ be any $l$-dimensional subspace of $X_N^m$.
Then, for $\lambda_{mN}^1\leq \lambda_{mN}^2 \leq \cdots \leq \lambda_{mN}^l \leq \cdots$, there holds}
\begin{eqnarray}\label{e3.6}
\lambda_{mN}^l=\min_{V_l\subset X_N^m} \max_{v\in V_l}\frac{\mathcal{A}_m(v,v)}{\mathcal{B}_m(v,v)}.
\end{eqnarray}
\end{lemma}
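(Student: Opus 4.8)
The plan is to establish the Courant--Fischer (minimax) characterization \eqref{e3.6} directly, mimicking the proof of the continuous statement \eqref{e3.3} but exploiting that $X_N^m$ is finite-dimensional, so that no compactness argument is needed.

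First I would observe that on the finite-dimensional space $X_N^m=\PP_N(I)\cap\OHm^2(I)$ both $\mathcal{A}_m(\cdot,\cdot)$ and $\mathcal{B}_m(\cdot,\cdot)$ are Hermitian and positive definite: as noted after the pole conditions in Section~2, $\sqrt{\mathcal{A}_m(u,u)}$ and $\sqrt{\mathcal{B}_m(u,u)}$ are genuine norms on any function satisfying the boundary and pole conditions incorporated into $\OHm^2(I)$, and $X_N^m\subset\OHm^2(I)$. Hence $\mathcal{B}_m$ restricts to an inner product on $X_N^m$, and the generalized eigenvalue problem \eqref{weak3} is equivalent to the ordinary eigenvalue problem for the operator $S_N:X_N^m\to X_N^m$ defined by $\mathcal{B}_m(S_N u,v)=\mathcal{A}_m(u,v)$ for all $v\in X_N^m$; this $S_N$ is self-adjoint and positive definite with respect to the inner product $\mathcal{B}_m$.

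Then by the spectral theorem in finite dimensions there is a $\mathcal{B}_m$-orthonormal basis $\{\psi_{mN}^1,\dots,\psi_{mN}^d\}$ of $X_N^m$, with $d=\dim X_N^m$, consisting of eigenfunctions of \eqref{weak3}, ordered so that $\lambda_{mN}^1\le\cdots\le\lambda_{mN}^d$. Writing an arbitrary $v\in X_N^m$ as $v=\sum_{i=1}^d c_i\psi_{mN}^i$ yields the Rayleigh-quotient identity $\mathcal{A}_m(v,v)/\mathcal{B}_m(v,v)=\sum_i\lambda_{mN}^i|c_i|^2\big/\sum_i|c_i|^2$. Choosing $V_l=\mathrm{span}\{\psi_{mN}^1,\dots,\psi_{mN}^l\}$ shows the right-hand side of \eqref{e3.6} is $\le\lambda_{mN}^l$; conversely, for any $l$-dimensional subspace $V_l\subset X_N^m$, a dimension count (since $l+(d-l+1)>d$) produces a nonzero $v\in V_l\cap\mathrm{span}\{\psi_{mN}^l,\dots,\psi_{mN}^d\}$, for which the Rayleigh quotient is $\ge\lambda_{mN}^l$, so $\max_{v\in V_l}\mathcal{A}_m(v,v)/\mathcal{B}_m(v,v)\ge\lambda_{mN}^l$. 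Combining the two bounds gives \eqref{e3.6}.

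The argument is essentially routine once positive definiteness is in hand; the only point deserving care --- and the one I would treat as the main obstacle --- is confirming that the pole and boundary constraints do not degenerate $\mathcal{B}_m$ (equivalently $\mathcal{A}_m$) on $X_N^m$, i.e.\ that the restricted forms remain inner products, so that the spectral decomposition and the dimension-counting step are legitimate; this is precisely where the explicit description of $\OHm^2(I)$ and the norm-equivalence recorded in Section~2 enter. Alternatively, one may simply invoke Theorem~3.1 of \cite{mg2003} with $\OHm^2(I)$ replaced by $X_N^m$, since that proof uses only that the underlying space is a Hilbert space on which $\mathcal{A}_m$ is coercive and $\mathcal{B}_m$ a bounded inner product, both of which hold trivially in finite dimensions.
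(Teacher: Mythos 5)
Your proof is correct. For the record, the paper itself offers no argument for this lemma: it simply remarks that the minimax principle carries over to the discrete problem and points to \cite{mg2003}, which is exactly the fallback you mention in your closing sentence. Your self-contained route --- reduce \eqref{weak3} to the ordinary eigenvalue problem for the $\mathcal{B}_m$-self-adjoint operator $S_N$ on the finite-dimensional space $X_N^m$, diagonalize, and run the standard two-sided Rayleigh-quotient/dimension-count argument --- is the classical Courant--Fischer proof and is valid here. You also correctly isolate the one point that genuinely needs checking, namely that $\mathcal{A}_m$ and $\mathcal{B}_m$ remain positive definite on $X_N^m$; this follows from the discussion in Section~2 (e.g.\ $\mathcal{B}_m(u,u)=0$ forces $u'\equiv 0$ and hence $u\equiv 0$ via $u(1)=0$, and $\mathcal{A}_m(u,u)=0$ together with $u(1)=u'(1)=0$ forces $u\equiv 0$), so the spectral decomposition and the intersection count $l+(d-l+1)>d$ are both legitimate. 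Compared with the paper's bare citation, your version buys a complete and elementary argument at the cost of a few lines; nothing is missing.
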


Define the orthogonal projection $\Pi_{N}^{2,m}:\OHm^2(I)\mapsto X_N^m$ 
such that
 \begin{align}
 \label{def:Pi}
&\mathcal{A}_m(\psi_m-\Pi_{N}^{2,m}\psi_m,v)=0,\quad \forall v\in X_N^m.
\end{align}
\begin{theorem}
\label{th:diseigen}
 {\it Let $\lambda_{mN}^l$ be obtained by solving (\ref{weak3}) as an approximation of $\lambda_m^l$, an eigenvalue of (\ref{weak2}). Then, we have}
\begin{eqnarray}\label{e3.7}
0<\lambda_m^l\leq\lambda_{mN}^l\leq\lambda_m^l \max_{v\in E_{1,l}}\frac{\mathcal{B}_m(v,v)}{\mathcal{B}_m(\Pi_{N}^{2,m}v,\Pi_{N}^{2,m}v)}.
\end{eqnarray}
\end{theorem}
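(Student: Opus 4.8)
The plan is to derive \eqref{e3.7} purely from the two minimax characterizations established above --- the continuous one \eqref{e3.3}--\eqref{e3.4} and the discrete one \eqref{e3.6} --- together with the defining orthogonality \eqref{def:Pi} of the projection $\Pi_N^{2,m}$. The positivity $\lambda_m^l>0$ is immediate: the eigenfunction $\psi_m^l\in\OHm^2(I)$ is nonzero, so $\mathcal{A}_m(\psi_m^l,\psi_m^l)>0$ and $0<\mathcal{B}_m(\psi_m^l,\psi_m^l)<\infty$, since both $\sqrt{\mathcal{A}_m(\cdot,\cdot)}$ and $\sqrt{\mathcal{B}_m(\cdot,\cdot)}$ are norms on $\OHm^2(I)$; hence $\lambda_m^l=\mathcal{A}_m(\psi_m^l,\psi_m^l)/\mathcal{B}_m(\psi_m^l,\psi_m^l)\in(0,\infty)$. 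The bound $\lambda_m^l\le\lambda_{mN}^l$ follows because $X_N^m\subset\OHm^2(I)$, so every $l$-dimensional subspace admissible in \eqref{e3.6} is also admissible in \eqref{e3.3}, and taking the minimum over a smaller family can only increase its value.

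For the upper bound I would test the discrete minimax \eqref{e3.6} with the trial space $W_l:=\Pi_N^{2,m}(E_{1,l})\subset X_N^m$, where $E_{1,l}=\mathrm{span}\{\psi_m^1,\dots,\psi_m^l\}$. Assume for the moment that $\Pi_N^{2,m}$ is injective on $E_{1,l}$, so that $\dim W_l=l$ and $W_l$ is admissible in \eqref{e3.6}; fix $v\in E_{1,l}$ and set $w=\Pi_N^{2,m}v\in X_N^m$. By \eqref{def:Pi} we have $\mathcal{A}_m(v-w,z)=0$ for all $z\in X_N^m$, and taking $z=w$ gives $\mathcal{A}_m(v,v)=\mathcal{A}_m(w,w)+\mathcal{A}_m(v-w,v-w)\ge\mathcal{A}_m(w,w)$; moreover \eqref{e3.4} with $k=1$ gives $\mathcal{A}_m(v,v)\le\lambda_m^l\,\mathcal{B}_m(v,v)$ for every $v\in E_{1,l}$. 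Combining these,
\[
\frac{\mathcal{A}_m(w,w)}{\mathcal{B}_m(w,w)}\le\frac{\mathcal{A}_m(v,v)}{\mathcal{B}_m(w,w)}\le\lambda_m^l\,\frac{\mathcal{B}_m(v,v)}{\mathcal{B}_m(\Pi_N^{2,m}v,\Pi_N^{2,m}v)},
\]
and taking the maximum over $v\in E_{1,l}$ --- equivalently over $w\in W_l$, by the bijectivity of $\Pi_N^{2,m}$ on $E_{1,l}$ --- together with \eqref{e3.6} yields
\[
\lambda_{mN}^l\le\max_{w\in W_l}\frac{\mathcal{A}_m(w,w)}{\mathcal{B}_m(w,w)}\le\lambda_m^l\max_{v\in E_{1,l}}\frac{\mathcal{B}_m(v,v)}{\mathcal{B}_m(\Pi_N^{2,m}v,\Pi_N^{2,m}v)},
\]
which is the right-hand inequality in \eqref{e3.7}.

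The only delicate point is the dimension count for $W_l$, i.e. the injectivity of $\Pi_N^{2,m}$ on the fixed finite-dimensional space $E_{1,l}$. This is in fact harmless for the stated inequality: if $\Pi_N^{2,m}v=0$ for some $v\in E_{1,l}\setminus\{0\}$, then $\mathcal{B}_m(\Pi_N^{2,m}v,\Pi_N^{2,m}v)=0$ and the right-hand side of \eqref{e3.7} is $+\infty$, so there is nothing to prove. When one wants $W_l$ to be genuinely $l$-dimensional, it suffices to note that $\Pi_N^{2,m}v\to v$ in $\OHm^2(I)$ for each fixed $v$, which forces $\Pi_N^{2,m}$ to be injective on $E_{1,l}$ for all $N$ large enough. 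I do not anticipate any obstacle beyond this bookkeeping; the substance of the proof is the single choice of trial space $W_l=\Pi_N^{2,m}(E_{1,l})$.
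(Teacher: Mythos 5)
Your proof is correct and follows essentially the same route as the paper: the same trial space $\Pi_{N}^{2,m}E_{1,l}$ in the discrete minimax, the same use of the Galerkin orthogonality \eqref{def:Pi} to get $\mathcal{A}_m(\Pi_{N}^{2,m}v,\Pi_{N}^{2,m}v)\le\mathcal{A}_m(v,v)$, and the same factorization through $\mathcal{A}_m(v,v)/\mathcal{B}_m(v,v)\le\lambda_m^l$ on $E_{1,l}$. Your explicit handling of the injectivity of $\Pi_{N}^{2,m}$ on $E_{1,l}$ (either the right-hand side is $+\infty$ and the bound is vacuous, or the trial space is genuinely $l$-dimensional) addresses a point the paper dismisses as obvious, so if anything you are slightly more careful.
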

\begin{proof} According to the coerciveness of $\mathcal{A}_m(u,v)$ and $\mathcal{B}_m(u,v)$ we easily
 derive $\lambda_m^l>0$. Since $X_N^m \subset \OHm^2(I)$, from (\ref{e3.3}) and (\ref{e3.6}) we can obtain $\lambda_m^l\leq\lambda_{mN}^l$.
Let $\Pi_{N}^{2,m}E_{1,l}$ denote the space spanned by $\Pi_{N}^{2,m}\psi_m^1,\Pi_{N}^{2,m}\psi_m^2,\cdots,\Pi_{N}^{2,m}\psi_m^l$.  It is obvious that
$\Pi_{N}^{2,m}E_{1,l}$ is a $l$-dimensional subspace of $X_N^m$. From the minimax principle, we have
\begin{eqnarray*}
\lambda_{mN}^l\leq  \max_{v\in\Pi_{N}^{2,m}E_{1,l}}\frac{\mathcal{A}_m(v,v)}{\mathcal{B}_m(v,v)}=\max_{v\in E_{1,l}}\frac{\mathcal{A}_m(\Pi_{N}^{2,m}v,\Pi_{N}^{2,m}v)}{\mathcal{B}_m(\Pi_{N}^{2,m}v,\Pi_{N}^{2,m}v)}.
\end{eqnarray*}
Since
$
\mathcal{A}_m(v,v)=\mathcal{A}_m(\Pi_{N}^{2,m}v,\Pi_{N}^{2,m}v)+2a_m(v-\Pi_{N}^{2,m}v,\Pi_{N}^{2,m}v)+\mathcal{A}_m(v-\Pi_{N}^{2,m}v,v-\Pi_{N}^{2,m}v),
$
from  $\mathcal{A}_m(v-\Pi_{N}^{2,m}v,\Pi_{N}^{2,m}v)=0$ and the non-negativity of $a(v-\Pi_{N}^{2,m}v,v-\Pi_{N}^{2,m}v)$, we have
\begin{eqnarray*}
\mathcal{A}_m(\Pi_{N}^{2,m}v,\Pi_{N}^{2,m}v)\leq \mathcal{A}_m(v,v).
\end{eqnarray*}
Thus, we have
\begin{eqnarray*}
\lambda_{mN}^l &\leq& \max_{v\in E_{1,l}}\frac{\mathcal{A}_m(v,v)}{\mathcal{B}_m(\Pi_{N}^{2,m}v,\Pi_{N}^{2,m}v)}\\
&=&\max_{v\in E_{1,l}}\frac{\mathcal{A}_m(v,v)}{\mathcal{B}_m(v,v)}\frac{\mathcal{B}_m(v,v)}{\mathcal{B}_m(\Pi_{N}^{2,m}v,\Pi_{N}^{2,m}v)}\\
&\leq&\lambda_m^l \max_{v\in E_{1,l}}\frac{\mathcal{B}_m(v,v)}{\mathcal{B}_m(\Pi_{N}^{2,m}v,\Pi_{N}^{2,m}v)}.
\end{eqnarray*}
The proof of Theorem \ref{th:diseigen} is completed.  \end{proof}

\subsection{Error estimates}

Denote by $\omega^{\alpha,\beta}:=\omega^{\alpha,\beta}(r)=(1-r)^\alpha r^\beta$ the Jacobi weight function of index $(\alpha,\beta)$, which is not necessarily in  $L^1(I)$.
Define the $L^2$-orthogonal projection $\pi_N^{0,0}: L^2(I)\mapsto \PP_N(I) $ such that
\begin{align*}
   ( \pi_N^{0,0}u-u, v )_{I} =0, \qquad v\in \PP_N(I).
\end{align*} 
Further, for $k\ge 1$, define  recursively the $H^k$-orthogonal projections $\pi_N^{-k,-k}: H^k(I)\mapsto \PP_N(I)$ such that
$$
\big[\pi_N^{-k,-k}u \big](r)= \int_{0}^r \big[\pi_{N-1}^{1-k,1-k} u'\big](t) \d t + u(0).
$$

Next, for any nonnegative integers $s\ge k \ge 0$, define the Sobolev space
\begin{align*}
H^{s,k}(I) = \big\{u \in H^k(I):  \sum_{l=0}^{s} \|\partial_r^l u\|_{\omega^{\max(l-k,0),\max(l-k,0)},I} <\infty  \big\}.
\end{align*}
Now we have the  following error estimate on $\pi^{-k,-k}_N$.
\begin{lemma}[{\cite[Theorem 3.1.4]{Li2001}}]
$ \pi_N^{-k,-k}u$ is  a Legendre tau approximation of $u$ such that
\begin{align}
\label{pibnd}
& \partial_r^l\big[\pi_N^{-k,-k}u](0)=\partial_r^lu(0), \quad  \partial_r^l\big[\pi_N^{-k,-k}u](1)=\partial_r^lu(1),
\qquad 0\le l\le k-1,
\\
\label{tau}
& (\pi_N^{-k,-k}u-u, v) = 0, \qquad v\in \PP_{N-2k}.
\end{align}
Further suppose  $u\in H^{s,k}(I)$ with $s\ge k$. Then for $N\ge k$,
\begin{align}
\label{Pierror-1}
\begin{split}
   \| \partial_r^l( \pi_N^{-k,-k}u &  -u)\|_{\omega^{l-k,l-k},I}  \lesssim  N^{l-s}  \| \partial_r^su\|_{\omega^{s-k,s-k},I} , \quad 0\le l\le k\le s.
 \end{split}
\end{align} 
\end{lemma}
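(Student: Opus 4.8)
The plan is to establish \eqref{pibnd}--\eqref{tau} and \eqref{Pierror-1} by induction on $k$, exploiting the recursive definition
$\big[\pi_N^{-k,-k}u\big](r)=\int_0^r\big[\pi_{N-1}^{1-k,1-k}u'\big](t)\,\d t+u(0)$, which immediately gives the commutation relation
$\partial_r\pi_N^{-k,-k}u=\pi_{N-1}^{-(k-1),-(k-1)}\partial_r u$. The base case $k=0$ is just the standard Legendre $L^2$-projection: \eqref{tau} with $v\in\PP_N$ is the defining orthogonality of $\pi_N^{0,0}$, and \eqref{Pierror-1} for $k=0$ is the classical Legendre approximation estimate $\|\partial_r^l(\pi_N^{0,0}u-u)\|_{\omega^{l,l}}\lesssim N^{l-s}\|\partial_r^s u\|_{\omega^{s,s}}$, which I would cite from \cite{shen2011} or \cite{Li2001}. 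For $k=1$, I would verify the endpoint interpolation $\pi_N^{-1,-1}u(0)=u(0)$ (immediate from the formula) and $\pi_N^{-1,-1}u(1)=u(1)$ (from $\int_0^1\pi_{N-1}^{0,0}u'=\int_0^1 u'$, using $1\in\PP_{N-1}$ and the $L^2$-orthogonality); \eqref{tau} with $v\in\PP_{N-2}$ follows by integration by parts, moving a derivative onto $v$ so that $v'\in\PP_{N-3}\subset\PP_{N-1}$ and invoking the $k=0$ orthogonality together with the vanishing boundary terms.

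For the inductive step, assume the three properties hold for $k-1$ with $N-1$ in place of $N$. From the commutation relation, $\partial_r^l\pi_N^{-k,-k}u(a)=\partial_r^{l-1}\pi_{N-1}^{-(k-1),-(k-1)}(\partial_r u)(a)=\partial_r^{l-1}(\partial_r u)(a)=\partial_r^l u(a)$ for $a\in\{0,1\}$ and $1\le l\le k-1$, while the case $l=0$ at $r=0$ is built into the formula and at $r=1$ follows as in the $k=1$ case; this yields \eqref{pibnd}. For \eqref{tau}, integrate by parts $2k$ times: since $\pi_N^{-k,-k}u-u$ has a zero of order $k$ at each endpoint by \eqref{pibnd}, all boundary terms vanish, and one is left with $\pm\int_0^1(\pi_N^{-k,-k}u-u)^{(2k)}\,\tilde v$ — hmm, that direction raises the derivative count, so instead I would integrate by parts to move $2k$ derivatives off the difference and onto $v$: writing the pairing against $v\in\PP_{N-2k}$, repeatedly integrating by parts transfers derivatives onto $v\in\PP_{N-2k}$, whose $j$-th derivative lies in $\PP_{N-2k-j}$; after $k$ steps we reach $\pm(\partial_r^k(\pi_N^{-k,-k}u-u),\partial_r^k v)=\pm(\pi_{N-1}^{-(k-1),-(k-1)}(\partial_r^k u)-\partial_r^k u,\partial_r^k v)$ via the iterated commutation relation, and $\partial_r^k v\in\PP_{N-3k}\subset\PP_{(N-1)-2(k-1)}$, so this vanishes by the inductive \eqref{tau}. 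For \eqref{Pierror-1}, the commutation relation reduces the estimate for $\partial_r^l(\pi_N^{-k,-k}u-u)$ with $l\ge 1$ directly to the inductive bound for $\partial_r^{l-1}(\pi_{N-1}^{-(k-1),-(k-1)}v-v)$ with $v=\partial_r u$, which gives $\lesssim N^{l-s}\|\partial_r^s u\|_{\omega^{s-k,s-k}}$ after noting $\omega^{(l-1)-(k-1),(l-1)-(k-1)}=\omega^{l-k,l-k}$; the remaining case $l=0$ follows from a weighted Poincaré/Hardy inequality on $I$ controlling $\|\pi_N^{-k,-k}u-u\|_{\omega^{-k,-k}}$ by $\|\partial_r(\pi_N^{-k,-k}u-u)\|_{\omega^{1-k,1-k}}$, using that the difference vanishes to order $k\ge 1$ at both endpoints.

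The main obstacle I anticipate is the $l=0$ weighted estimate: the weight $\omega^{-k,-k}=(1-r)^{-k}r^{-k}$ is strongly singular and non-integrable, so bounding $\|\pi_N^{-k,-k}u-u\|_{\omega^{-k,-k}}$ requires a chain of Hardy-type inequalities near each endpoint that consume exactly the $k$-fold vanishing guaranteed by \eqref{pibnd}, without losing powers of $N$. I would handle this by iterating a one-step Hardy inequality $\|w\|_{\omega^{j-1,j-1}}\lesssim\|\partial_r w\|_{\omega^{j,j}}$ valid for $w$ vanishing appropriately at the endpoints, applied to successive derivatives of the difference; care is needed that each application is justified for the actual (finite) regularity available, but since $\pi_N^{-k,-k}u-u\in\PP_N+H^{s,k}$ this is not a real difficulty. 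Everything else is bookkeeping with the recursive structure, and in fact the cleanest exposition may simply cite \cite[Theorem 3.1.4]{Li2001} for \eqref{Pierror-1} and \eqref{pibnd} and only sketch the integration-by-parts argument for the tau-orthogonality \eqref{tau}, which is what the statement's attribution already suggests.
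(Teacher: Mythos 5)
The paper offers no proof of this lemma --- it is imported wholesale from \cite[Theorem 3.1.4]{Li2001} --- so your argument has to stand on its own. Your skeleton (induction on $k$ via the commutation relation $\partial_r\pi_N^{-k,-k}u=\pi_{N-1}^{-(k-1),-(k-1)}\partial_ru$, hence $\partial_r^k\pi_N^{-k,-k}u=\pi_{N-k}^{0,0}\partial_r^ku$; endpoint interpolation \eqref{pibnd} read off the recursive formula; tau orthogonality \eqref{tau} by $k$-fold integration by parts) is the right one, but the identity you land on for \eqref{tau} is wrong: integrating by parts $k$ times against a $k$-fold antiderivative $V\in\PP_{N-k}$ of $v\in\PP_{N-2k}$ gives $(e,v)=(-1)^k(\partial_r^ke,V)$ with all boundary terms killed by \eqref{pibnd}, and this vanishes because $\partial_r^ke=\pi_{N-k}^{0,0}\partial_r^ku-\partial_r^ku\perp\PP_{N-k}$. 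The quantity $(\partial_r^ke,\partial_r^kv)$ you wrote is not produced by that integration by parts and does not equal $(e,v)$; also $\partial_r^k\pi_N^{-k,-k}u$ is $\pi_{N-k}^{0,0}\partial_r^ku$, not $\pi_{N-1}^{-(k-1),-(k-1)}\partial_r^ku$. These are repairable slips.

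The genuine gap is precisely the point you flag and then do not resolve: the case $l=0$ of \eqref{Pierror-1}, which is needed at every level of your induction because $l\ge1$ at level $k$ reduces to $l-1=0$ at level $k-1$. A Hardy/Poincar\'e inequality $\|e\|_{\omega^{-j,-j}}\lesssim\|\partial_re\|_{\omega^{1-j,1-j}}$ is $N$-independent, so chaining it down to the base $L^2$ estimate yields only $N^{k-s}$ for $l=0$, i.e.\ a loss of $N^{k-l}$ against the claimed $N^{l-s}$ for every $l<k$ (already for $k=1$, $l=0$ you get $N^{1-s}$ instead of $N^{-s}$). The missing powers of $N$ do not come from the $k$-fold vanishing at the endpoints; they come from the spectral structure of the error: writing $\partial_r^ku=\sum_n\hat u_nL_n$ in shifted Legendre series, $\partial_r^ke$ is the tail $-\sum_{n>N-k}\hat u_nL_n$, and each antidifferentiation replaces $L_n$ by a multiple of $(L_{n+1}-L_{n-1})/(2n+1)$, gaining a factor $n^{-1}\le N^{-1}$ per integration while simultaneously creating the boundary zeros that render the weights $\omega^{l-k,l-k}$ harmless. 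Making that coefficient-level computation (or an equivalent duality argument) rigorous is the actual content of \cite[Theorem 3.1.4]{Li2001}; without it your induction establishes \eqref{Pierror-1} only for $l=k$.
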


  \begin{theorem}
\label{lm:Pi}
Suppose $u \in \OHm^2(I)$ and $u'+\frac{m}{r}u\in H^{s-1,1}(I)$ with $s\ge 2$ and $m\in \NN_0$. Then for $N\ge m+3$,
\begin{align}
\label{Pierr}
\big\|\Pi_{N}^{2,m}u-u\big\|_{2,m,I} \lesssim \big( N+m \big) N^{1-s}\Big\|\partial_r^{s-1}\Big( \partial_r+\frac{m}{r}\Big) u\Big\|_{\omega^{s-2,s-2},I}.
\end{align}
\end{theorem}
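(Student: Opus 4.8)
The plan is to use that $\Pi_N^{2,m}$ is the $\mathcal{A}_m$-orthogonal projection onto $X_N^m$ defined by \eqref{def:Pi}, so that the standard best-approximation argument gives $\big\|\Pi_N^{2,m}u-u\big\|_{2,m,I}=\min_{v_N\in X_N^m}\big\|u-v_N\big\|_{2,m,I}$, and then to exhibit one good admissible competitor $v_N$. The construction is driven by the identity \eqref{auv0}: taking the lower signs there and setting $w:=\big(\partial_r+\tfrac{m}{r}\big)u$, one has, for any $v_N\in\OHm^2(I)$ with $w_N:=\big(\partial_r+\tfrac{m}{r}\big)v_N$,
\[
\mathcal{A}_m(u-v_N,u-v_N)=\big\|(w-w_N)'\big\|_{\omega^{0,1},I}^{2}+(1-m)^2\big\|w-w_N\big\|_{\omega^{0,-1},I}^{2}.
\]
Since $w\in H^{s-1,1}(I)$ by hypothesis, the problem is reduced to approximating $w$ by a polynomial $w_N$ of degree $\le N-1$ whose ``primitive'' $v_N$ lands in $X_N^m$.

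First I would take $w_N:=\pi_{N-1}^{-1,-1}w\in\PP_{N-1}(I)$ and define $v_N$, the polynomial primitive of $w_N$ under the operator $\partial_r+\tfrac{m}{r}$, by
\[
v_N(r)=r^{-m}\!\int_0^r t^m w_N(t)\,\d t\quad(m\ge1),\qquad
v_N(r)=\int_0^r w_N(t)\,\d t-\int_0^1 w_N(t)\,\d t\quad(m=0).
\]
Because $\int_0^r t^m w_N$ vanishes to order $m+1$ at the origin, one checks $v_N\in\PP_N(I)$, with $v_N(0)=0$ when $m\ge1$, and by construction $\big(\partial_r+\tfrac{m}{r}\big)v_N=w_N$.

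Next I would verify $v_N\in X_N^m$, i.e. the pole conditions \eqref{cond3}--\eqref{cond5} together with $v_N(1)=v_N'(1)=0$; this is the step I expect to be the main obstacle. Since $\pi_{N-1}^{-1,-1}$ reproduces endpoint values (\eqref{pibnd} with $k=1$), $w_N(0)=w(0)$ and $w_N(1)=w(1)$. Using the integral representation $r^mu(r)=\int_0^r t^m w(t)\,\d t$ and the pole/boundary conditions on $u$, one finds $w(0)=(1+m)u'(0)$, which vanishes unless $m=1$ by \eqref{cond3}--\eqref{cond5}; $w(1)=u'(1)+mu(1)=0$; and $\int_0^1 t^m w(t)\,\d t=[\,r^mu\,]_0^1=u(1)=0$. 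Hence $v_N'(0)=w_N(0)/(1+m)=0$ for $m\ge2$, $v_N'(0)=w_N(0)=0$ for $m=0$, and $m=1$ imposes no condition at the origin. At $r=1$, the tau-orthogonality \eqref{tau} (for $\pi_{N-1}^{-1,-1}$ the test space is $\PP_{N-3}$, and $t^m\in\PP_{N-3}$ exactly because $N\ge m+3$) gives $\int_0^1 t^m w_N=\int_0^1 t^m w=0$, so $v_N(1)=0$, whence also $v_N'(1)=w_N(1)-mv_N(1)=0$. This moment identity is precisely where the hypothesis $N\ge m+3$ is consumed.

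Finally, since $r\le1$ and $\omega^{0,-1}\le\omega^{-1,-1}$ on $I$,
\[
\big\|u-v_N\big\|_{2,m,I}^{2}\le\big\|(w-w_N)'\big\|_{\omega^{0,0},I}^{2}+(1-m)^2\big\|w-w_N\big\|_{\omega^{-1,-1},I}^{2},
\]
and I would bound the two terms by the projection estimate \eqref{Pierror-1} applied to $\pi_{N-1}^{-1,-1}w$ (with $k=1$ and smoothness index $s-1\ge1$) at $l=1$ and $l=0$ respectively, giving $\|(w-w_N)'\|_{\omega^{0,0},I}\lesssim N^{2-s}\|\partial_r^{s-1}w\|_{\omega^{s-2,s-2},I}$ and $\|w-w_N\|_{\omega^{-1,-1},I}\lesssim N^{1-s}\|\partial_r^{s-1}w\|_{\omega^{s-2,s-2},I}$. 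Combining these with $\big\|\Pi_N^{2,m}u-u\big\|_{2,m,I}\le\|u-v_N\|_{2,m,I}$ and $N^2+(1-m)^2\lesssim(N+m)^2$ (valid for $N\ge1$, $m\in\NN_0$), then taking square roots, yields \eqref{Pierr}. Apart from the verification that $v_N$ is admissible, the only care needed is in tracking the two Jacobi weights.
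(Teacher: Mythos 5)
Your proposal is correct and follows essentially the same route as the paper: the same identity \eqref{auv0} reducing $\|u-v_N\|_{2,m,I}^2$ to norms of $w-w_N$ with $w=(\partial_r+\frac{m}{r})u$, the same competitor built by applying $\pi^{-1,-1}_{N-1}$ to $w$ and integrating back against $r^m$ (your $r^{-m}\int_0^r$ coincides with the paper's $-r^{-m}\int_r^1$ via the zero-moment identity that consumes $N\ge m+3$), and the same weight comparisons and estimate \eqref{Pierror-1}. The endpoint/pole verifications and the final bound $N^2+(1-m)^2\lesssim (N+m)^2$ match the paper's argument step for step.
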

\begin{proof} Define the differential operator $\mathcal{D}_m=\partial_r+\frac{m}{r} = \frac{1}{r^m}\partial_r ( r^m \cdot) $ 
and then set 
\begin{align*}
u_N(r) = -\frac{1}{r^{m}}\int_r^1 t^{m} \big[\pi^{-1,-1}_{N-1}\mathcal{D}_mu\big](t) \d t.
\end{align*}
We shall first prove $u_N\in X_N^m$.
By \eqref{tau}, we find that
\begin{align*}
\int_0^1& t^{m} \big[\pi^{-1,-1}_{N-1}\mathcal{D}_mu\big](t) \d t =
\int_0^1 t^{m}\big[ \mathcal{D}_mu\big](t) \d t =  \int_0^1 \partial_t\big[t^{m} u(t)\big]\d t = 0,\quad 
N\ge m+3, m\neq 0,
\end{align*}
where the last equality sign is derived from the boundary condition  $u(1)=0$.
Moreover,
\begin{align*}
&\partial_r\int_r^1 t^{m}  \big[\pi^{-1,-1}_{N-1}\mathcal{D}_mu\big](t)  \d t
=  -r^{m}\big[ \pi^{-1,-1}_{N-1}\mathcal{D}_mu\big](r).
\end{align*}
As a result,  $u_N\in \PP_N(I)$ and 
$$
u_N(0) = -\lim_{r\to 0} \frac{1}{r^{m}}\int_r^1 t^{m} \big[\pi^{-1,-1}_{N-1}\mathcal{D}_mu\big](t) \d t = 0, \qquad m\neq 0.
$$
Further, $u\in \OH^2_m(I)$ implies
\begin{align*}
\big[\mathcal{D}_mu\big](1)= 0,\quad m\in \ZZ; \qquad 
\big[\mathcal{D}_mu\big](0)= 0 , \quad m\neq 1,
\end{align*}
which, together with the property \eqref{pibnd} of $\pi^{-1,-1}_N$, gives 
\begin{align*}
\big[\pi^{-1,-1}_{N-1}\mathcal{D}_mu\big](1)= 0,\quad m\in \ZZ; \qquad 
\big[\pi^{-1,-1}_{N-1}\mathcal{D}_mu\big](0)= 0 , \quad m\neq 1.
\end{align*}
In the sequel,  we deduce that $u_N'(0)=0$ if $m\neq 1$ and $u_N(1)=u_N'(1)=0$.
In summary, we conclude that $u_N\in X_N^m$.

Next by \eqref{auv0} and \eqref{Pierror-1},  we have 
\begin{align*}
 \big\|u_N-u\big\|_{2,m}^2 =&\, \big\|\partial_r(\pi^{-1,-1}_{N-1}-I)\mathcal{D}_mu\big\|^2_{\omega^{0,1}}
 + (m-1)^2   \big\|(\pi^{-1,-1}_{N-1}-I)\mathcal{D}_mu\big\|^2_{\omega^{0,-1}}
 \\
 \le&\,  \big\|\partial_r(\pi^{-1,-1}_{N-1}-I)\mathcal{D}_mu\big\|^2_{\omega^{0,0}}
 + (m-1)^2   \big\|(\pi^{-1,-1}_{N-1}-I)\mathcal{D}_mu\big\|^2_{\omega^{-1,-1}}
 \\
 \lesssim &\, \big[N^{4-2s} +(m-1)^2  N^{2-2s}  \big]\,\big\|\partial_r^{s-1}\mathcal{D}_mu\big\|^2_{\omega^{s-2,s-2}}.
\end{align*}

Finally, \eqref{Pierr} is an immediate consequence of the projection  theorem,
\begin{align*}
  \big\|\Pi_{N}^{2,m}u-u\big\|_{2,m,I}  = \inf_{v\in X_N^m}  \big\|v-u\big\|_{2,m,I}
  \le  \big\|u_N-u\big\|_{2,m,I}.
\end{align*}
The proof is now completed.
\end{proof}

\begin{theorem}
Let $\lambda_{mN}^l$ is the $l$-th approximate eigenvalue of $\lambda_m^l$.\
If  $\{\psi_m^{i}\}_{i=1}^{l} \subset \OHm^2(I)\cap H^{s,2}(I)$ with $s\geq 2$, then we have
\begin{align*}
|\lambda_{mN}^l-\lambda_m^l|\lesssim ( N^2 + m^2 ) N^{2-2s} \max_{1\le i\le l }\Big\|\partial_r^{s-1}\Big(\partial_r+\frac{m}{r}\Big) \psi_m^i\Big\|^2_{\omega^{s-2,s-2},I}.
\end{align*}
\end{theorem}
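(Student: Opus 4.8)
The plan is to combine Theorem~\ref{th:diseigen} with the approximation estimate of Theorem~\ref{lm:Pi}. By Theorem~\ref{th:diseigen} we already have $0<\lambda_m^l\le\lambda_{mN}^l$, so it suffices to bound the upper deviation
\begin{align*}
\lambda_{mN}^l-\lambda_m^l \le \lambda_m^l\Big( \max_{v\in E_{1,l}}\frac{\mathcal{B}_m(v,v)}{\mathcal{B}_m(\Pi_{N}^{2,m}v,\Pi_{N}^{2,m}v)} -1\Big).
\end{align*}
First I would fix an arbitrary $v\in E_{1,l}$ with $\mathcal{B}_m(v,v)=1$, write $v=\sum_{i=1}^l c_i\psi_m^i$, and observe that since the $\psi_m^i$ are $\mathcal{A}_m$- and $\mathcal{B}_m$-orthogonal, $v$ belongs to $\OHm^2(I)\cap H^{s,2}(I)$ and $\|\partial_r^{s-1}\mathcal{D}_m v\|_{\omega^{s-2,s-2}}\lesssim\max_{1\le i\le l}\|\partial_r^{s-1}\mathcal{D}_m\psi_m^i\|_{\omega^{s-2,s-2}}$ (the sum over $i\le l$ contributes only an $l$-dependent constant, absorbed into $\lesssim$). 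Then I would estimate $|\mathcal{B}_m(v,v)-\mathcal{B}_m(\Pi_{N}^{2,m}v,\Pi_{N}^{2,m}v)|$ by writing $e:=v-\Pi_{N}^{2,m}v$ and expanding
\begin{align*}
\mathcal{B}_m(v,v)-\mathcal{B}_m(\Pi_{N}^{2,m}v,\Pi_{N}^{2,m}v) = 2\mathcal{B}_m(e,\Pi_{N}^{2,m}v) + \mathcal{B}_m(e,e).
\end{align*}

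The key ingredient is then the continuous embedding $\OHm^2(I)\hookrightarrow\OHm^1(I)$ with $\|w\|_{1,m,I}\lesssim\|w\|_{2,m,I}$ (a Poincar\'e-type inequality following from the identity \eqref{buv0}--\eqref{auv0} and the vanishing boundary/pole data, or more simply from $\mathcal{B}_m(w,w)=\int_0^1(w'\pm\tfrac{m}{r}w)^2 r\,\d r$ controlled by the first term of $\mathcal{A}_m$ after an integration by parts and the endpoint conditions). Combining this embedding with Theorem~\ref{lm:Pi},
\begin{align*}
\|e\|_{1,m,I}\lesssim\|e\|_{2,m,I}\lesssim (N+m)N^{1-s}\Big\|\partial_r^{s-1}\Big(\partial_r+\frac{m}{r}\Big)v\Big\|_{\omega^{s-2,s-2},I}=:\delta_N.
\end{align*}
Since $\|\Pi_{N}^{2,m}v\|_{1,m,I}\le\|v\|_{1,m,I}+\|e\|_{1,m,I}=1+\delta_N$ and $\sqrt{\mathcal{B}_m(\cdot,\cdot)}=\|\cdot\|_{1,m,I}$, Cauchy--Schwarz gives $|2\mathcal{B}_m(e,\Pi_{N}^{2,m}v)+\mathcal{B}_m(e,e)|\lesssim \delta_N + \delta_N^2$. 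Hence $\mathcal{B}_m(\Pi_{N}^{2,m}v,\Pi_{N}^{2,m}v)\ge 1-c\delta_N-c\delta_N^2$, and for $N$ large enough (so that $\delta_N\le\tfrac12$, say) we get
\begin{align*}
\frac{\mathcal{B}_m(v,v)}{\mathcal{B}_m(\Pi_{N}^{2,m}v,\Pi_{N}^{2,m}v)}-1 = \frac{1-\mathcal{B}_m(\Pi_{N}^{2,m}v,\Pi_{N}^{2,m}v)}{\mathcal{B}_m(\Pi_{N}^{2,m}v,\Pi_{N}^{2,m}v)}\lesssim \delta_N + \delta_N^2 \lesssim \delta_N^2,
\end{align*}
where in the last step I would note that one should in fact square more carefully: the correct sharp bound comes from observing that $\mathcal{B}_m(e,\Pi_N^{2,m}v)$ itself is $O(\delta_N^2)$ because of the near-orthogonality $\mathcal{A}_m(e,\Pi_N^{2,m}v)=0$ and a duality/Aubin--Nitsche argument, so that the leading term is genuinely $\delta_N^2$.

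This last point is the main obstacle: getting the full power $N^{2-2s}$ rather than merely $N^{1-s}$. The honest route is a duality argument — for each $v\in E_{1,l}$ one considers the source problem $\mathcal{A}_m(w,\phi)=\mathcal{B}_m(e,\phi)$ for all $\phi\in\OHm^2(I)$, shows its solution $w$ satisfies $\|w\|_{2,m,I}\lesssim\|e\|_{1,m,I}$ with the requisite regularity so that $\|w-\Pi_N^{2,m}w\|_{2,m,I}\lesssim (N+m)N^{-1}\|e\|_{1,m,I}$, and then $\mathcal{B}_m(e,e)=\mathcal{A}_m(w,e)=\mathcal{A}_m(w-\Pi_N^{2,m}w,e)\le\|w-\Pi_N^{2,m}w\|_{2,m,I}\|e\|_{2,m,I}\lesssim (N+m)N^{-1}\delta_N^2\cdot(\text{lower order})$, which after collecting constants yields $\mathcal{B}_m(v,v)-\mathcal{B}_m(\Pi_N^{2,m}v,\Pi_N^{2,m}v)\lesssim (N^2+m^2)N^{2-2s}\max_i\|\partial_r^{s-1}\mathcal{D}_m\psi_m^i\|_{\omega^{s-2,s-2},I}^2$. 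Multiplying by $\lambda_m^l$ (a fixed constant for fixed $l$, absorbed into $\lesssim$) and taking the maximum over $v\in E_{1,l}$, i.e. over the finitely many $\psi_m^i$, $1\le i\le l$, completes the proof. I would present the duality step explicitly, since without it one only recovers the suboptimal rate; the $\OHm^2\hookrightarrow\OHm^1$ embedding and the elliptic regularity of the auxiliary problem $\mathcal{A}_m(w,\cdot)=\mathcal{B}_m(e,\cdot)$ are the two facts that need to be checked with some care in the weighted spaces.
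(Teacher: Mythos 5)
Your overall skeleton matches the paper's: start from Theorem \ref{th:diseigen}, so that everything reduces to showing $\max_{v\in E_{1,l}}\mathcal{B}_m(v,v)/\mathcal{B}_m(\Pi_N^{2,m}v,\Pi_N^{2,m}v)-1=O\big((N^2+m^2)N^{2-2s}\big)$. The gap is in how you obtain the doubled rate. Writing $e=v-\Pi_N^{2,m}v$, the term $\mathcal{B}_m(e,e)=\|e\|_{1,m,I}^2$ is already $O(\delta_N^2)$ by your Poincar\'e embedding together with Theorem \ref{lm:Pi} --- no duality is needed there --- whereas the genuinely dangerous term is the cross term $\mathcal{B}_m(v,e)$ (equivalently $\mathcal{B}_m(\Pi_N^{2,m}v,e)$, the two differing by the harmless $\mathcal{B}_m(e,e)$), which a priori is only $O(\delta_N)$. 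You devote the detailed Aubin--Nitsche argument to the harmless term and dispose of the dangerous one with the phrase ``near-orthogonality $\mathcal{A}_m(e,\Pi_N^{2,m}v)=0$ and a duality/Aubin--Nitsche argument''; that is exactly the step that is missing. Moreover, the duality route you sketch hinges on an elliptic regularity estimate for the source problem $\mathcal{A}_m(w,\phi)=\mathcal{B}_m(e,\phi)$ in the weighted spaces $\OHm^2(I)$, whose coefficients are singular at $r=0$; no such regularity result is proved in the paper or invoked from the literature, so the argument as written does not close.

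The paper avoids all of this by exploiting the fact that $v$ is a combination of \emph{eigenfunctions}: writing $v=\sum_i\mu_i\psi_m^i$ and $e_i=\psi_m^i-\Pi_N^{2,m}\psi_m^i$, the eigenvalue equation \eqref{weak2} and the Galerkin orthogonality \eqref{def:Pi} give
\begin{align*}
\mathcal{B}_m(e_i,\psi_m^j)=\frac{1}{\lambda_m^j}\mathcal{A}_m(\psi_m^j,e_i)=\frac{1}{\lambda_m^j}\mathcal{A}_m(e_j,e_i)\le\frac{1}{\lambda_m^j}\,\|e_j\|_{2,m,I}\,\|e_i\|_{2,m,I},
\end{align*}
so the cross term is \emph{exactly} a product of two projection errors, each controlled by Theorem \ref{lm:Pi}. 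This is the standard Babu\v{s}ka--Osborn doubling trick; it is elementary, requires no auxiliary boundary value problem and no regularity theory, and directly yields the claimed $(N^2+m^2)N^{2-2s}$ bound after summing over $i,j\le l$ and feeding the resulting $\varepsilon$ back into Theorem \ref{th:diseigen}. If you replace your duality step by this identity, your proof becomes complete (and shorter).
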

\begin{proof}
For any $0\neq v\in E_{1,l}$,  it can be represented by $v=\sum_{i=1}^l\mu_i\psi_m^i$;  we then have
\begin{align*}
&\frac{\mathcal{B}_m(v,v)-\mathcal{B}_m(\Pi_{N}^{2,m}v,\Pi_{N}^{2,m}v)}{\mathcal{B}_m(v,v)}\leq \frac{2|\mathcal{B}_m(v,v-\Pi_{N}^{2,m}v)|}{\mathcal{B}_m(v,v)}\\
&\qquad \leq\frac{2\sum_{i,j=1}^{l}|\mu_i| |\mu_j||\mathcal{B}_m(\psi_m^i-\Pi_{N}^{2,m}\psi_m^i,\psi_m^j)|}{\sum_{i=1}^{l}|\mu_i|^2}\\
&\qquad \leq 2l\max_{i,j=1,\cdots,l}|\mathcal{B}_m(\psi_m^i-\Pi_{N}^{2,m}\psi_m^i,\psi_m^j)|
:= \varepsilon.
\end{align*}

Meanwhile, by the variational form \eqref{weak2}, the definition \eqref{def:Pi} 
of $\Pi_N^{2,m}$,  Cauchy-Schwarz inequality
 and Theorem \ref{lm:Pi},
we have
\begin{align*}
&|\mathcal{B}_m(\psi_m^i-\Pi_{N}^{2,m}\psi_m^i,\psi_m^j)|=\frac{1}{\lambda_m^j}|\lambda_m^jb_m(\psi_m^j,\psi_m^i-\Pi_{N}^{2,m}\psi_m^i)|\\
&=\frac{1}{\lambda_m^j}|\mathcal{A}_m(\psi_m^j,\psi_m^i-\Pi_{N}^{2,m}\psi_m^i)|
=\frac{1}{\lambda_m^j}|\mathcal{A}_m(\psi_m^j-\Pi_{N}^{2,m}\psi_m^j,\psi_m^i-\Pi_{N}^{2,m}\psi_m^i)|\\
&\leq\frac{1}{\lambda_m^j} \, \|\psi_m^j-\Pi_{N}^{2,m}\psi_m^j\|_{2,m,I}\, \|\psi_m^i-\Pi_{N}^{2,m}\psi_m^i\|_{2,m,I} 
\\
&\lesssim (N^2+m^4) N^{2-2s}  \|\partial_r^s\psi_m^j\|_{\omega^{s-2,s-2},I}\, \|\partial_r^s\psi_m^i\|_{\omega^{s-2,s-2},I}.
\end{align*}
As a result, we have the following estimate for $\varepsilon$,
\begin{align*}
  \varepsilon 
  \lesssim  ( N^2 + m^2 ) N^{2-2s} \max_{1\le i\le l }\Big\|\partial_r^{s-1}\Big(\partial_r+\frac{m}{r}\Big) \psi_m^i\Big\|^2_{\omega^{s-2,s-2},I}.
\end{align*}
For sufficiently large $N$, $\varepsilon<\frac12$. Thus
\begin{align*}
&0<\frac{\mathcal{B}_m(v,v)}{\mathcal{B}_m(\Pi_{N}^{2,m}v,\Pi_{N}^{2,m}v)} \leq\frac{1}{1-\varepsilon} \le 1+ 2\varepsilon,
\end{align*}
and we finally deduce from  Theorem \ref{th:diseigen} that
\begin{align*}
0<\lambda_{mN}^l-\lambda_m^l \le  2\lambda_m^l   \varepsilon \lesssim  
( N^2 + m^2 ) N^{2-2s} \max_{1\le i\le l }\Big\|\partial_r^{s-1}\Big(\partial_r+\frac{m}{r}\Big) \psi_m^i\Big\|^2_{\omega^{s-2,s-2},I}.
\end{align*}
The proof is now completed.
\end{proof}

\subsection{Implementations}
We describe in this section how to solve the problems \eqref{weak3} efficiently. To this end, we first construct a set of basis functions for $X_N^m$.
Let
\begin{align}
 \phi_i(r) =(1-r)^2r^2J^{2,1}_{i-4}(2r-1),\quad  i \ge 4, \label{basis1}
\end{align}
where $J_k^{\alpha,\beta}$ is the  Jacobi polynomial of degree $k$.

It is clear that
\begin{align*}
&X_N^m=\mathrm{span}\{\phi^m_i=\phi_i: 4\le i\le N \},\qquad m\ge 2
\\
&X_N^0=\mathrm{span}\{\phi^0_i=\phi_i: 4\le i\le N \}
\oplus \mathrm{span}\big\{\phi_{3}^0(r)=\tfrac{1}{4}(1-r)^2(2r+1)\big\},
\\
&X_N^1=\mathrm{span}\{\phi^1_i=\phi_i: 4\le i\le N \}
\oplus \mathrm{span}\big\{\phi_{3}^1(r)=\tfrac{1}{2}(1-r)^2r\big\}.
 \end{align*}
 
 Define $N_m=4$ if $m\ge2$ and $N_m=3$ otherwise. 
Our basis functions lead to the penta-diagonal  matrix $A^m=[\mathcal{A}_m(\phi^m_i,\phi^m_j)]_{N_m\le i,j\le N}$
 and the deca-diagonal mass matrix $B^m=[\mathcal{B}_m(\phi^m_i,\phi^m_j)]_{N_m\le i,j\le N}$
 instead of the hepta- and hendecagon-diagonal ones in \cite{shen1997}.
 
 \begin{lemma}
 \label{Expan}
 For $i\ge 4$,
 \begin{align}
 \label{phid2}
& \phi_i''(r) = \tfrac { (i-3) ( i-1 ) i}{2(2i-3)}  J^{0,1}_{i-2}(2r-1) 
 +   \tfrac {2 ( i-3 ) ^2 ( i-2 )  ( i-1
 ) }{ ( 2i-3 )  ( 2i-5 ) }  J^{0,1}_{i-3}(2r-1)
+ \tfrac { ( i-4 )  ( i-3 ) ^{2}}{2(2i-5)}J^{0,1}_{i-4}(2r-1),
\\
 \label{phid1}
&\phi_i'(r) = r\Big[\tfrac { ( i-3 ) i}{2(2i-3)} J^{0,1}_{i-2}(2r-1) 
 -\tfrac {3 ( i-3 )  ( i-2 ) }{ ( 2i-3 )  ( 2i-5 ) } J^{0,1}_{i-3}(2r-1)
-\tfrac { ( i-4 )  ( i-3 ) }{2(2i-5)}J^{0,1}_{i-4}(2r-1)\Big]
\\
\label{phid1r}
\begin{split}
&\quad=\tfrac{(i-3)i^2}{4(2i-3)(2i-1)}  J^{0,1}_{i-1}(2r-1)
+\tfrac{(i-1)(i-3)(2i^2-7i+2)}{2(2i-5)(2i-1)(2i-3)} J^{0,1}_{i-2}(2r-1)
-\tfrac{(i-2)(i-3)}{(2i-3)(2i-5)} J^{0,1}_{i-3}(2r-1)
\\
&\qquad
-\tfrac{(2i^2-9i+6)(i-3)^2}{2(2i-7)(2i-5)(2i-3)} J^{0,1}_{i-4}(2r-1)
-\tfrac{(i-3)(i-4)^2}{4(2i-5)(2i-7)} J^{0,1}_{i-5}(2r-1),
\end{split}
\\
 \label{phid0}
& \frac{\phi_i(r)}{r} =r\Big[ \tfrac {i-3}{2(2i-3)}J^{0,1}_{i-2}(2r-1) 
-\tfrac { 2 ( i-3 )  ( i-2 ) }{ ( 2i-3 )  ( 2i-5 ) }J^{0,1}_{i-3}(2r-1)
+\tfrac {i-3}{2(2i-5)} J^{0,1}_{i-4}(2r-1)\Big]
\\
\label{phid0r}
\begin{split}
&\quad =\tfrac{i(i-3)}{4(2i-1)(2i-3)} J^{0,1}_{i-1}(2r-1)
-\tfrac{(i-1)(i-3)}{(2i-1)(2i-3)(2i-5)} J^{0,1}_{i-2}(2r-1)
-\tfrac{(i-2)(i-3)}{2(2i-3)(2i-5)} J^{0,1}_{i-3}(2r-1)
\\
&\qquad 
+\tfrac{(i-3)^2}{(2i-7)(2i-3)(2i-5)} J^{0,1}_{i-4}(2r-1)
+\tfrac{(i-3)(i-4)}{4(2i-5)(2i-7)} J^{0,1}_{i-5}(2r-1),
\end{split}
 \end{align}
 and 
 \begin{align}
 \label{phi3d2}
 & \phi_3^0{}''(r) =  J^{0,1}_{1}(2r-1) + \tfrac{1}{2}  J^{0,1}_{0}(2r-1),\qquad \qquad 
 \phi_3^1{}''(r) =  J^{0,1}_{1}(2r-1),
\\
 \label{phi30}
 \begin{split}
  &\phi_3^0{}'(r) = \frac{r}{2} \big[ J^{0,1}_{1}(2r-1)-  J^{0,1}_{0}(2r-1) \big]
\\  
& \qquad  =\tfrac{3}{20}J^{0,1}_{2}(2r-1)+\tfrac{1}{10} J^{0,1}_{1}(2r-1) -\tfrac{1}{4} J^{0,1}_{0}(2r-1),
\end{split}
 \\
 \label{phi31}
  \begin{split}
&  \phi_3^1{}'(r) - \frac{ \phi_3^1(r)}{r} = \frac{r}{3}\big[  J^{0,1}_{1}(2r-1)- J^{0,1}_{0}(2r-1) \big] 
\\  
& \qquad =\tfrac{1}{10}J^{0,1}_{2}(2r-1)+\tfrac{1}{15} J^{0,1}_{1}(2r-1) -\tfrac{1}{6} J^{0,1}_{0}(2r-1).
\end{split}
 \end{align}
Thus 
 for $j\ge i$,
 \begin{align}
 \label{Aij}
 &A_{i,j}^{m} =\begin{cases}
\frac {  ( i-3 )  ( i-2 )  ( 3{i}^{4}+2{i}^{2}{m}^{2}+3{m}^{4}-24{i}^{3}-8i{m}^{2}+70{i}^{2}-88i+42 ) }{ 4 ( 2i-5 )  ( i-1 )  ( 2i-3 ) }
+(\frac{9}{16}\delta_{m,0}+\frac12\delta_{m,1})  \delta_{i,3},& j=i,\\[0.3em]
\frac { ( i-2 )  ( i-3 )  ( 4{i}^{4}-4{m}^{4}-24{i}^{3}+50{i}^{2}+10{m}^{2}-42
i+9 ) }{ 4 ( 2i-5 )  ( 2i-1 )  ( 2i-3 ) }
+  (\frac{3}{20}\delta_{m,0}+\frac14\delta_{m,1})  \delta_{i,3}, & j=i+1,\\[0.3em]
\frac { ( i-3 )  ( i-2+m ) 
 ( i-2-m )  ( i-m )  ( i+m ) }{ 8
 ( 2i-1 )  ( 2i-3 ) }+  \frac{3}{40}\delta_{m,0}  \delta_{i,3}, & j=i+2,\\[0.3em]
 0,  & j\ge i+3,
 \end{cases}
  \end{align}
  and
 \begin{align}
 \label{Bij}
 &B_{i,j}^{m} = \begin{cases}
  \frac {  ( i-3 )^2 ( i-2 )  ( 5i^2+3m^2-20i+8 ) }{ 16 ( 2i-7 )  ( 2i-1 )  ( 2i-3 )  ( 2i-5 ) }
  +  (\frac{3}{80}\delta_{m,0}+\frac{1}{60}\delta_{m,1})  \delta_{i,3}, & j=i,\\[0.3em]
\frac { ( i-2 )  ( i-3 )  ( 4i^4-24{i}^{3}+43{i}^{2}+6{m}^{2}-21i-26 ) }{16  ( 2i-7 )  ( 2i-5 )  ( 2i+1 )  ( 2i-1 )  ( 2i-3 ) }+  (\frac{1}{140}\delta_{m,0}+\frac{1}{84}\delta_{m,1})  \delta_{i,3},   & j=i+1,\\[0.3em]
 -\frac { ( i-1 ) ^{2} ( i-3 )  ( {i}^{2}+{m}^{2}-2i-4 ) }{ 8 ( 2i-5 )  ( 2i+1 )  ( 2i-1 )  ( 2i-3 ) } -  \frac{3}{560}\delta_{m,0}  \delta_{i,3},& j=i+2,\\[0.3em]
 -\frac { i ( i-3 )  ( 4 {i}^{4}-8{i}^{3}-13{i}^{2}+6{m}^{2}+17i-6 ) }{16 ( 2i-5 )  ( 2i-3 )  ( 2i-1 )  ( 2i+3 )  ( 2i+1 ) }-  (\frac{3}{280}\delta_{m,0}+\frac{1}{120}\delta_{m,1})\delta_{i,3},  & j=i+3,\\[0.3em]
-\frac { ( i-3 ) i ( i+1 )  ( i-m )  ( i+m ) }{32 ( 2i-3 )  ( 2i-1 )  ( 2i+3)  ( 2i+1 ) }-  (\frac{1}{280}\delta_{m,0} +\frac{1}{315}\delta_{m,1}) \delta_{i,3},  & j=i+4,
\\[0.3em]
0, & j\ge i+5.
 \end{cases}
 \end{align}
 \end{lemma}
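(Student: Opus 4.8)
The plan is to derive the explicit entries of $A^m$ and $B^m$ by a pure computation with Jacobi polynomials, so the core of the proof is the expansion formulae \eqref{phid2}--\eqref{phi31} together with the standard $L^2_{\omega^{0,1}}$-orthogonality of the Jacobi polynomials $J_k^{0,1}$. First I would record the three classical ingredients: the three-term recurrence that expresses $r\,J_k^{0,1}(2r-1)$ (equivalently $(2r-1)J_k^{0,1}$) as a combination of $J_{k-1}^{0,1},J_k^{0,1},J_{k+1}^{0,1}$; the derivative/connection formula relating $\frac{d}{dr}\big[(1-r)^2 r^2 J_{i-4}^{2,1}(2r-1)\big]$ to Jacobi polynomials of parameter $(1,1)$ and then $(0,1)$; and the normalization constant $\|J_k^{0,1}\|^2_{\omega^{0,1}}$. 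With these, formulae \eqref{phid2}--\eqref{phid0} for $\phi_i$, $\phi_i'$, $\phi_i/r$ and the shifted versions \eqref{phid1r}, \eqref{phid0r} are obtained by repeatedly applying the $r\times(\cdot)$ recurrence; the special low-order basis functions $\phi_3^0,\phi_3^1$ are handled by the same formulae or simply by direct differentiation of their closed forms.

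Next I would assemble $A_{i,j}^m$. Using \eqref{auv0} with the sign chosen so that $\mathcal D_{-m}=\partial_r-\frac{m}{r}$ appears, $\mathcal A_m(\phi_i,\phi_j)$ becomes $\int_0^1\big[r(\phi_i'-\tfrac mr\phi_i)'(\phi_j'-\tfrac mr\phi_j)' + \tfrac{(1-m)^2}{r}(\phi_i'-\tfrac mr\phi_i)(\phi_j'-\tfrac mr\phi_j)\big]\d r$; substituting \eqref{phid1} and \eqref{phid0} (both of which have the convenient factor $r$, so that $\phi_i'-\tfrac mr\phi_i = r\sum c_k J_k^{0,1}$ and its $r$-derivative combine against the $\omega^{0,1}=r$ weight) reduces every integral to a finite sum of inner products $(J_k^{0,1},J_l^{0,1})_{\omega^{0,1}}$, which vanish unless $k=l$. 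Because $\phi_i'-\tfrac mr\phi_i$ involves only the three indices $i-2,i-3,i-4$, the overlap of the index sets for $i$ and $j$ is nonempty only when $|i-j|\le 2$, giving the pentadiagonal structure; collecting the surviving diagonal terms and simplifying the resulting rational functions of $i$ and $m$ yields \eqref{Aij}. The Kronecker-$\delta_{i,3}$ corrections are produced identically from \eqref{phi3d2}--\eqref{phi30}--\eqref{phi31} for the extra basis functions in $X_N^0$ and $X_N^1$.

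For $B_{i,j}^m$ I would use the identity \eqref{buv0}, $\mathcal B_m(\phi_i,\phi_j)=\int_0^1 (\phi_i'-\tfrac mr\phi_i)(\phi_j'-\tfrac mr\phi_j)\,r\,\d r$, but now it is cleaner to plug in the genuinely shifted expansions \eqref{phid1r} and \eqref{phid0r}, in which $\phi_i'$ and $\phi_i/r$ are written directly in the $J_k^{0,1}$ basis (indices $i-1$ through $i-5$) so that the weight $r\d r=\omega^{0,1}\d r$ again triggers orthogonality. Since each factor now spans five consecutive indices, the nonzero band is $|i-j|\le 4$, i.e. the deca-diagonal structure, and matching the overlapping indices and simplifying produces \eqref{Bij} together with its $\delta_{i,3}$ terms from \eqref{phi30}--\eqref{phi31}. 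The main obstacle is purely bookkeeping: keeping the many rational coefficients in $i$ (with denominators built from $(2i-7)(2i-5)(2i-3)(2i-1)(2i+1)(2i+3)$) straight through the recurrence applications and the final collection of like terms; there is no conceptual difficulty, and a symbolic-algebra check is the natural way to certify the closed forms. I would present the Jacobi recurrence/derivative identities, show one representative diagonal ($j=i$) and one off-diagonal ($j=i+2$) computation in detail for both $A^m$ and $B^m$, and state that the remaining entries and the $\delta_{i,3}$ corrections follow by the same routine manipulation.
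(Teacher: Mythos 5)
Your proposal follows essentially the same route as the paper's Appendix B: expand $\phi_i''$, $\phi_i'$ and $\phi_i/r$ (and the $i=3$ special functions) in the $J^{0,1}$ basis and let the orthogonality \eqref{Jorth} with respect to the weight $\omega^{0,1}=r$ deliver both the band structure and the entries; the paper happens to derive those expansions through generalized Jacobi polynomials with negative indices (via \eqref{symm}, \eqref{diff}, \eqref{Jexpand2}, \eqref{eq:gjp}) rather than your three-term recurrence, but that difference is cosmetic. One caution: in \eqref{auv0} the operator $\partial_r-\frac{m}{r}$ is paired with the factor $(1+m)^2/r$, not $(1-m)^2/r$ as written in your second paragraph (the factor $(1-m)^2$ goes with $\partial_r+\frac{m}{r}$); this slip does not affect the pentadiagonal structure and, since $A^m_{ij}$ depends only on $m^2$ and $m^4$, it would surface in the symbolic check, but the pairing must be kept consistent when the coefficients are collected.
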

We postpone the proof to Appendix \ref{App:B}.
 
We shall look for
  \begin{align}
\psi_{mN}=\sum_{i=N_m}^{N}\hat u_{i}^m\phi_i^m.\label{express1}
 \end{align}
 Now, plugging the expression of \eqref{express1} in \eqref{weak3}, and taking $v_{N}$
through all the basis functions in $X_N^m$, we will arrive at the following algebraic linear eigenvalue
system:
 \begin{align}
A^m \hat u^m=\lambda_{N}^m B^m \hat u^m,
\end{align}
with
 \begin{align}
&A^m=(A_{ij}^m)_{N_m\le i,j\le N}, \qquad B^m=(B_{ij}^m)_{N_m\le i,j\le N},\qquad 
\hat u^m=(\hat  u_{N_m}^m,\cdots,\hat  u_{N}^m)^{\tr},
\end{align}
which  can be efficiently solved.

\section{Extension to ellipitc domain}
In the section, we extend our algorithm  and numerical analysis from a circular disk to an elliptic domain,
$$\Omega=\Big\{ (x,y): \frac{x^2}{a^2}+ \frac{y^2}{b^2}<1  \Big\},$$ 
where $a$ and $b$  are  the semi-major axis and the semi-minor axis, respectively,
i.e., $a\ge b> 0$.

\subsection{Pole conditions }
Let us make the polar transformation  $(x,y)=(ar\cos\theta,br\sin\theta)$,
which  maps
the rectangle $R=\{(r,\theta):  0\le r<1,  0\le \theta<2\pi\}$ in polar coordinates
onto the ellipse $\Omega$ in  Cartesian coordinates.
For $s\ge 0$, we denote  $\OH^s(R)=\{\tilde u(r,\theta)=u(a r\cos\theta,br\sin \theta):  u \in H^s_0(\Omega)  \}$,
 which is equipped with the norm $ \| u\|_{s}$. 
If no confusion would arise, we shall also use
the notation $u$ for  its correspondence $\tilde u$ on $R$.

We now revisit the gradient and Laplacian in Cartesian coordinates.
It is readily checked that 
\begin{align}
\nabla u=&\big(\frac{1}{a}\big(\cos\theta\partial_r u-\frac{1}{r}\sin\theta\partial_\theta u),\frac{1}{b}(\sin\theta\partial_ru+\frac{1}{r}\cos\theta\partial_\theta u)\big)^{\tr}, \label{a5.1} \\
\begin{split}
\Delta u
=&\frac12\Big(\frac{1}{a^2}+\frac{1}{b^2}\Big)\Big( \partial_r^2u + \frac{1}{r}\partial_r u+\frac{1}{r^2}\partial^2_\theta u\Big)
\\
&+ \frac12\Big(\frac{1}{a^2}-\frac{1}{b^2}\Big)\Big[ \cos2\theta \Big( \partial_r^2u - \frac{1}{r}\partial_r u-\frac{1}{r^2}\partial^2_\theta u\Big)
+\frac{2\sin 2\theta}{r}\Big(\frac{1}{r}\partial_\theta u-\partial_r\partial_\theta u\Big)\Big]
.
\end{split}\label{a5.2}
\end{align}
Specifically,
\begin{align}
\label{Nablaue}
\begin{split}
 \nabla [u_m(r) \e^{\i m\theta}]&=\Big( \frac{1}{2a}  \big( u_m' -\frac{m}{r} u_m  \big)
 \e^{\i (m+1) \theta}+ \frac{1}{2a}   \big( u_m' +\frac{m}{r} u_m  \big)
 \e^{\i (m-1) \theta},  
 \\
 &   -\frac{\i}{2b} \big( u_m' -\frac{m}{r} u_m  \big)
 \e^{\i (m+1) \theta} +\frac{\i}{2b} ( u_m' +\frac{m}{r} u_m  \big)
 \e^{\i (m-1) \theta}\Big)^{\tr},
 \end{split}
\\
\label{Deltaue}
\begin{split}
  \Delta [u_m(r) \e^{\i m\theta}] &= \frac12\Big( \frac{1}{a^2}+\frac{1}{b^2}\Big) \mathcal{L}_m u_m(r) \e^{\i m\theta}
  \\
&+\frac14\Big(\frac{1}{a^2}-\frac{1}{b^2}\Big) \Big[
 \mathcal{K}_m u_m(r) \e^{\i (m+2)\theta} +  \mathcal{K}_{-m} u_m(r) \e^{\i (m-2)\theta}
\Big],
\end{split}
\end{align}
where $\mathcal{L}_m$ and $ \mathcal{K}_m$ are differential operators defined by
\begin{align*}
\mathcal{L}_m= \partial_r^2 + \frac{1}{r}\Big(\partial_r -\frac{m^2}{r}\Big),
\qquad \mathcal{K}_m= \partial_r^2 - \frac{1+2m}{r}\partial_r +\frac{m^2+2m}{r^2}
=\mathcal{L}_m-\frac{2m}{r} \Big(\partial_r -\frac{1}{r}\Big).
\end{align*}

To make the $\nabla  [u_m(r) \e^{\i m\theta}] $ and $\Delta  [u_m(r) \e^{\i m\theta}] $ meaningful at the origin, 
one requires that
\begin{align}
\label{cond2a}
mu_m(0)=0,\qquad \lim_{r\to 0+}\Big(u_m'(r)-\frac{m^2u_m(r)}{r}\Big)  = 0,
,\qquad m  \lim_{r\to 0+}  \Big(u_m'(r) - \frac{u_m(r)}{r}\Big) = 0
\end{align}
which, as before,  can be further simplified into the following three categories,
\begin{align}
&(1). \quad {u}_m'(0)=0,  && m=0;\label{cond3a}\\
&(2). \quad {u}_m(0)=0,   && |m|=1;\label{cond4a}\\
&(3). \quad {u}_m(0)={u}_m'(0)=0, && |m|\geq2.\label{cond5a}
\end{align}

\subsection{Specral-Galerkin approximation and implementation}
Define the approximation spaces,
\begin{align*}
 X_{N} =X_{N/2,N},\qquad 
X_{M,N} = \mathrm{span}\big\{ u_m(r) \e^{\i m \theta}:   u_m\in X_N^m,  -M \le m \le M    \big\}.
\end{align*}
Then the  spectral-Galerkin approximation to \eqref{weak0} reads: Find $(\lambda_N,\psi_{N})\in \RR\times X_{N}$ such that $\|\nabla u\|=1$ and 
\begin{align}
\label{Galellipse}
(\Delta \psi_{N},\Delta v)=\lambda_{N} (\nabla \psi_{N},\nabla v), \qquad 
v\in X_{N}.
\end{align}

We now give a brief explanation on how to solve the problems \eqref{Galellipse} efficiently.   
Define the matrices  $A^{m,n}, B^{m,n}\in \RR^{N-N_m+1,N-N_n+1}$  with their 
entries 
\begin{align*}
& A^{m,n}_{j,k} = 0, \qquad |m-n|\not\in \{ 0,2,4\},\\
&A^{m,m}_{j,k}  = A^{m,m}_{k,j} = \frac{\pi}{2}\big(\frac{1}{a^2}+\frac{1}{b^2}\Big)^2 \big( \mathcal{L}_m \phi^m_k,  \mathcal{L}_m \phi^m_j  \big)_{\omega^{0,1},I}  
\\
&\qquad + \frac{\pi}{8} \Big(\frac{1}{a^2}-\frac{1}{b^2}\Big)^2 \Big[ \big( \mathcal{K}_m \phi^m_k,   \mathcal{K}_m\phi^m_j  \big )_{\omega^{0,1},I}  
+ \big( \mathcal{K}_{-m} \phi^m_k,   \mathcal{K}_{-m}\phi^m_j  \big )_{\omega^{0,1},I}\Big]
,
\\
&A^{m,m+2}_{j,k}  = A^{m+2,m}_{k,j} 
 =\frac{\pi}{4}\Big(\frac{1}{a^4}-\frac{1}{b^4}\Big) \Big[   \big( \mathcal{L}_{m+2}\phi^{m+2}_k,  \mathcal{K}_m \phi^m_j  \big )_{\omega^{0,1},I}  
+  \big(\mathcal{K}_{-m-2}\phi^{m+2}_k,   \mathcal{L}_m \phi^m_j  \big)_{\omega^{0,1},I} \Big] ,
\\
&A^{m,m+4}_{j,k}  =  A^{m+4,m}_{k,j}  =
\frac{\pi}{8}\Big(\frac{1}{a^2}-\frac{1}{b^2}\Big)^2 \big(\mathcal{K}_{-m-4}\phi^{m+4}_k,  \mathcal{K}_m \phi^m_j   \Big)_{\omega^{0,1},I},  
\end{align*}
and 
\begin{align*}
& B^{m,n}_{j,k} = 0, \qquad |m-n|\not\in \{ 0,2\},\\
&B^{m,m}_{j,k} =B^{m,m}_{k,j}  =\pi \Big(\frac{1}{a^2}+\frac{1}{b^2}\Big) \Big[ \big( \partial_r\phi^m_k,   \partial_r\phi^m_j  \big)_{\omega^{0,1},I}  
+  m^2 \big(\phi^m_k,   \phi^m_j  \big)_{\omega^{0,-1},I}  \Big]
,
\\
&B^{m,m+2}_{j,k} =  B^{m+2,m}_{k,j}  =  \frac{\pi}{2} \Big(\frac{1}{a^2}+\frac{1}{b^2}\Big) \Big( \big( \partial_r\phi^{m+2}_k +\frac{m+2}{r}\big)\phi^{m+2}_k,   \big(\partial_r-\frac{m}{r}\big) \phi^m_j  \Big)_{\omega^{0,1},I} .
\end{align*}
In view of \eqref{phid2}-\eqref{phi31}, all the nontrivial matrices are penta-diagonal,
and their nonzero entries can all be evaluated analytically.
Further suppose
$$
\psi_N = \sum_{m=-N/2}^{N/2} \sum_{k=N_m}^{N} \hat u_{k}^m \phi^m_k(r) \e^{\i m \theta}.
$$
We  arrive at the following algebraic eigenvalue
problem:
 \begin{align}
\bs A \bs {\hat u}=\lambda_{N}\bs B \bs {\hat u},
\end{align}
where  $\bs {\hat u}$ is  the unknown vector
$$
\bs{\hat u} =  ((\hat  u^{-N/2})^{\tr}, (\hat  u^{1-N/2})^{\tr}, \cdots, (\hat  u^{N/2} )^{\tr} )^{\tr},
 \qquad \hat u^m = ( \hat u^m_{N_m}, \hat u^m_{N_m+1}, \cdots, \hat u^m_{N} )^{\tr},
$$
and $\bs A$ and $\bs B$ are  block hepta-diagonal and block penta-diagonal matrices, respectively,
\begin{align*}
\bs A = \big[ A^{m,n} \big]_{-N/2\le m,n\le N/2}\quad \text{ and }\quad  
\bs B =   \big[ B^{m,n} \big]_{-N/2\le m,n\le N/2}.
\end{align*}

\subsection{Error estimates}

We now conduct the error analysis of \eqref{weak0} by using the standard  theory of Babu\v{s}ka and J. Osborn \cite{babuska1991}.
To this end,
we  first
define the semi-norm $ |\cdot |_{s,*} $ in $H^{s}(\Omega)$ with $s\ge 2$,
\begin{align*}
 |u|_{s,*,\Omega} =  \Big[  \sum_{\nu=0}^s \big\|\partial_x^{\nu}\partial_y^{s-\nu}u\big\|_{\varpi^{s-2}}^2  + \sum_{\nu=0}^2  \big\| (a^2y\partial_x- b^2x\partial_y )^{s-2}  \partial_x^{\nu}\partial_y^{2-\nu} u \big\|^2 \Big]^{\frac12},
\end{align*}
   where  the weight function $\varpi^{\alpha}=\varpi^{\alpha}(x,y) = \varpi^{\alpha}(r):= (1-r^2)^{\alpha}$.

\begin{theorem}
Suppose $u\in H_0^2(\Omega)\cap H^{s}(\Omega)$ for $s\ge 2$. Then it holds that
   \begin{align}
   \label{Inferr}
   \begin{split}
   \inf_{v\in X_N} \|\Delta (u-v)\| \lesssim&\, N^{2-s}  |u|_{s,*}.
    \end{split}
   \end{align}
\end{theorem}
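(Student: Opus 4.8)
The strategy is to reduce the approximation problem on the ellipse to the already-analyzed one-dimensional problems on $I=(0,1)$ via the Fourier expansion in $\theta$, exactly as was done for the bilinear forms in \S2. Writing $u=\sum_{m\in\ZZ}u_m(r)\e^{\i m\theta}$ and choosing the natural competitor $v=\sum_{|m|\le N/2}(\Pi_N^{2,m}u_m)(r)\e^{\i m\theta}\in X_N$, I would use \eqref{Deltaue} to express $\Delta(u-v)$ in the Fourier basis. The shift operators $\mathcal{L}_m$, $\mathcal{K}_m$, $\mathcal{K}_{-m}$ couple mode $m$ only to modes $m,m\pm 2$, so $\|\Delta(u-v)\|^2$ splits into a sum over $m$ of terms controlled by $\|\mathcal{L}_m(u_m-\Pi_N^{2,m}u_m)\|_{\omega^{0,1},I}^2$ and the analogous $\mathcal{K}_{\pm m}$ quantities. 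Since $\mathcal{A}_m(w,w)=\|\mathcal{L}_mw\|_{\omega^{0,1},I}^2$ for $w\in\OHm^2(I)$ — recall $\mathcal{L}_mw = w''+w'/r-m^2w/r^2$ — and one checks $\|\mathcal{K}_{\pm m}w\|_{\omega^{0,1},I}\lesssim(1+m)\,\|w\|_{2,m,I}$ using $\mathcal{K}_m=\mathcal{L}_m-\tfrac{2m}{r}(\partial_r-\tfrac1r)$ together with the pole conditions, the whole error is bounded by $2\pi\sum_{|m|\le N/2}(1+m)^2\|u_m-\Pi_N^{2,m}u_m\|_{2,m,I}^2$ plus the tail $\sum_{|m|>N/2}$ of the full norm of $u$, which is negligible for smooth $u$.

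Next I would invoke Theorem~\ref{lm:Pi}: for each $m$,
\begin{align*}
(1+m)\,\|u_m-\Pi_N^{2,m}u_m\|_{2,m,I}\lesssim (1+m)(N+m)N^{1-s}\Big\|\partial_r^{s-1}\Big(\partial_r+\tfrac{m}{r}\Big)u_m\Big\|_{\omega^{s-2,s-2},I}.
\end{align*}
Since only $|m|\le N/2$ contribute, $(1+m)(N+m)\lesssim N^2$, so each term is $\lesssim N^{4-2s}$ times a weighted Sobolev seminorm of $u_m$. Summing in $m$ and recognizing that $\sum_m\|\partial_r^{s-1}(\partial_r+\tfrac{m}{r})u_m\|_{\omega^{s-2,s-2},I}^2$ is, up to the $\theta$-integration, a mixed polar-Cartesian derivative quantity, one identifies this sum with (a constant times) $|u|_{s,*,\Omega}^2$. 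This identification is where the precise form of the seminorm — the Cartesian derivatives $\partial_x^\nu\partial_y^{s-\nu}u$ weighted by $\varpi^{s-2}=(1-r^2)^{s-2}$, together with the angular-derivative operator $a^2y\partial_x-b^2x\partial_y$ which in polar coordinates is essentially $\partial_\theta$ — is exactly engineered: the operator $\partial_r+\tfrac{m}{r}$ applied to $u_m\e^{\i m\theta}$ corresponds to a radial derivative that mixes with the angular structure, and $\omega^{s-2,s-2}=(1-r)^{s-2}r^{s-2}$ against the measure $r\,\d r$ matches $(1-r^2)^{s-2}$ after absorbing powers of $r$ via the pole conditions.

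The main obstacle will be this last identification, i.e. proving that $\sum_{m\in\ZZ}\|\partial_r^{s-1}(\partial_r+\tfrac{m}{r})u_m\|_{\omega^{s-2,s-2},I}^2\lesssim|u|_{s,*,\Omega}^2$ in a way that is uniform in $s$ and handles the three pole-condition regimes $m=0$, $|m|=1$, $|m|\ge2$ separately. Concretely I would first establish, via \eqref{a5.1}, that the first-order operators $\partial_x,\partial_y$ acting on $u$ produce Fourier modes involving $u_m'\pm\tfrac{m}{r}u_m$ shifted to $\e^{\i(m\pm1)\theta}$; iterating, the pure derivatives $\partial_x^\nu\partial_y^{s-\nu}u$ generate exactly the radial operators $\partial_r\pm\tfrac{\cdot}{r}$ composed $s$ times, and a careful bookkeeping of which combinations appear — together with Parseval in $\theta$ and the change of variables $(x,y)\mapsto(ar\cos\theta,br\sin\theta)$ with Jacobian $ab\,r$ — yields the stated weighted norm with weight $\varpi^{s-2}$. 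The angular term $\sum_{\nu=0}^2\|(a^2y\partial_x-b^2x\partial_y)^{s-2}\partial_x^\nu\partial_y^{2-\nu}u\|^2$ is needed to absorb the $m^{s-2}$-type factors coming from the $1/r$-powers when $r$ is away from $0$ and the weight $\varpi^{s-2}$ degenerates only at $r=1$; near $r=0$ the pole conditions guarantee $u_m$ vanishes to order $|m|$, so the factors $m/r$ are harmless. Once this norm-equivalence is in hand, combining the mode-wise estimate with the summation gives \eqref{Inferr} immediately, and the tail terms for $|m|>N/2$ are dominated by $N^{2-s}|u|_{s,*}$ since each is at most $N^{-2(s-2)}$ times a fixed Sobolev norm of $u$.
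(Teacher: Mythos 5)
Your route is genuinely different from the paper's, and as written it does not close. The paper's proof is short and structural: it first observes $\|\Delta w\|^2=\|\partial_x^2w\|^2+2\|\partial_x\partial_yw\|^2+\|\partial_y^2w\|^2$ for $w\in H_0^2(\Omega)$, maps $\Omega$ to the unit disk by $(x,y)\mapsto(ax,by)$, shows via $(1-x^2-y^2)^2(x+\i y)^m(x-\i y)^n=(1-r^2)^2r^{m+n}\e^{\i(m-n)\theta}$ that $\PP_{N_*}^{-2}(D)\subset X_N$ with $N_*=\min(N,N/2+4)$, and then quotes the polynomial approximation theorem on the disk \cite[Theorem 4.3]{LX15}, which is stated directly in terms of the Cartesian seminorm $|u|_{s,*}$. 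All of the analytic difficulty is thereby outsourced to that citation. You instead build a competitor mode by mode from the 1D projectors $\Pi_N^{2,m}$ and try to reassemble the 2D seminorm yourself; this could in principle work, but two essential steps are missing.

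First, your rate accounting loses a power of $N$. You bound $\|\mathcal{K}_{\pm m}w\|_{\omega^{0,1},I}\lesssim(1+m)\|w\|_{2,m,I}$ and carry the resulting factor $(1+m)^2$ into $\sum_{|m|\le N/2}(1+m)^2\|u_m-\Pi_N^{2,m}u_m\|_{2,m,I}^2$; combined with Theorem \ref{lm:Pi} this gives per-mode $(1+m)^2(N+m)^2N^{2-2s}\lesssim N^{6-2s}$ for $m\sim N$, i.e.\ rate $N^{3-s}$, not the claimed $N^{4-2s}$ and not the target $N^{2-s}$. (The fix would be to prove the sharper bound $\|\mathcal{K}_{\pm m}w\|_{\omega^{0,1},I}\lesssim\|w\|_{2,m,I}$ \emph{uniformly in $m$}, which requires weighted Hardy inequalities exploiting that $\mathcal{A}_m(w,w)$ already carries the factor $(1\pm m)^2$ in front of $\|r^{-1/2}(w'\mp\frac mr w)\|^2$; you neither state nor prove this.) Second, and more seriously, the identification $\sum_{m}\|\partial_r^{s-1}(\partial_r+\frac mr)u_m\|_{\omega^{s-2,s-2},I}^2\lesssim|u|_{s,*}^2$, which you yourself flag as ``the main obstacle,'' is precisely the hard content of the theorem — it is essentially the norm-equivalence underlying the Li--Xu result the paper cites — and your sketch of it (iterating \eqref{a5.1}, ``careful bookkeeping,'' absorbing $m^{s-2}$ factors into the angular term) is not a proof. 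The same applies to the tail $|m|>N/2$, whose control at rate $N^{2-s}$ needs the quantitative angular regularity encoded in the $(a^2y\partial_x-b^2x\partial_y)^{s-2}$ term, again only asserted. Until these two points are supplied, the argument establishes at best a weaker estimate with an unidentified right-hand side.
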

\begin{proof}
We first note that
\begin{align*}
 \|\Delta u\|^2 =&\,\int_{\Omega} \Big[\big|\partial_x^2u\big|^2+ (\partial_x^2u\partial_y^2\overline{u}+ \partial_y^2u\partial_x^2\overline{u} ) + \big|\partial_y^2u\big|^2
 \Big] \d x \d y
 \\  =&\, \|\partial_x^2u\|^2 + 2\|\partial_x\partial_y u \|^2 + \|\partial_y^2 u\|^2, \qquad u\in H_0^2(\Omega),
\end{align*}
where we derive the second equality sign by integration by parts.
Owing to the linear mapping  $(x,y)\mapsto (ax,by)$  from $D$ onto $\Omega$,  it suffices to prove \eqref{Inferr} for $\Omega$ being the unit disk $D$, i.e.,
\begin{align}
\label{InferrX}
   \begin{split}
   \inf_{v\in X_N} |  u-v|_{2,D} \lesssim&\, N^{2-s} \Big[  \sum_{\nu=0}^s \big\|\partial_x^{\nu}\partial_y^{s-\nu} u\big\|_{\varpi^{s-2},D}  + \sum_{\nu=0}^2  \big\| (y\partial_x- x\partial_y )^{s-2}  \partial_x^{\nu}\partial_y^{2-\nu} u \big\|_{D} \Big].
    \end{split}
    \end{align}
To this end,  we further denote 
  $\PP_N^{-2}(D)= \PP_N(D)\cap H_0^2(D)$ and find  that
$$ (1-x^2-y^2)^2  (x+ \i y )^{m} (x- \i y )^n =(1-r^2)^2 r^{m+n}  \e^{\i (m-n)\theta},
\quad m,n \in \NN_0,\   (x,y)\in D.$$
It is then obvious that $\PP_{\min(N,N/2+4)}^{-2}(D)\subset X_{N/2,N}=X_N$
and 
$$
 \inf_{v\in X_N} |  u-v|_{2,D}  \le  \inf_{v\in \PP_{N_*}^{-2}(D)} |  u-v|_{2,D} , \qquad N_*=\min(N,N/2+4).
$$
Thus we deduce \eqref{InferrX}  from the following error estimate on  polynomial approximations 
\cite[Theorem 4.3]{LX15},
\begin{align*}
 \inf_{v\in \PP_N^{-2}} |  u-v|_{2,D} \lesssim\, N^{2-s} \Big[  \sum_{\nu=0}^s \big\|\partial_x^{\nu}\partial_y^{s-\nu}u\big\|_{\varpi^{s-2},D}  + \sum_{\nu=0}^2  \big\| (y\partial_x- x\partial_y )^{s-2}  \partial_x^{\nu}\partial_y^{2-\nu} u \big\|_{D} \Big].
\end{align*}
The proof is now completed.
\end{proof}

By the approximation theory of  Babu\v{s}ka and Osborn on 
the Ritz method for self-adjoint and positive-definite  eigenvalue problems 
\cite[pp. 697-700]{babuska1991},  
we now arrive at the following main theorem.
\begin{theorem}
Let $\left\{ \lambda_{i,N}\right\}$ be 
the eigenvalues of \eqref{Galellipse}  ordered non-decreasingly with respect to $i$, repeated according to their multiplicities.
Further let $\lambda_k$ be an eigenvalue of \eqref{weak0} with the geometric multiplicity $q$  and assume that
$\lambda_k=\lambda_{k+1}=\dots=\lambda_{k+q-1}$.
   Then there exits a constant $C>0$ such that
   \begin{align*}
           &\lambda_k \le \lambda_{j,N} \le \lambda_k + C N^{4-2s} \sup_{\psi\in E(\lambda_k)}  | \psi|_{s,*,\Omega}^2, \quad  j=k,k+1,\dots,k+q-1,
   \end{align*}
   where
   \begin{align*}
  E(\lambda_k) := \left\{ \psi \text{ is an eigenfunction corresponding to } \lambda_k    \text{ with } \|\psi\|_{1,\Omega}=1     \right\} .
\end{align*}

    Let $\psi_{j,N}$ be an eigenfuction corresponding to $\lambda_{j,N}$ for $j=k,k+1,\dots,k+q-1$, then there exists a constant $C$ such that
  \begin{align*}
           \inf_{u\in E(\lambda_k)} \|u-\psi_{j,N}\|_{2,\Omega} \le  C N^{2-s} \sup_{\psi\in E(\lambda_k)}  | \psi|_{s,*,\Omega}.
   \end{align*}
    Let $\psi_{k}$ be an eigenfuction corresponding to $\lambda_{k}$,  then there exist  a constant $C$
   and a function  $v_N\in \mathrm{span}\{ \psi_{k,N},\dots,\psi_{k+q-1,N} \} $ such that
  \begin{align*}
         \|\psi_k-v_{N}\|_{2,\Omega} \le  C N^{2-s} \sup_{\psi\in E(\lambda_k)}   | \psi|_{s,*,\Omega}.
   \end{align*}

\end{theorem}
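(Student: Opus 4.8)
The plan is to invoke the abstract spectral approximation theory of Babu\v{s}ka and Osborn for self-adjoint, positive-definite eigenvalue problems, exactly as packaged in \cite[pp.~697--700]{babuska1991}, and to verify that its single nontrivial hypothesis---uniform approximability of eigenfunctions in the energy norm---follows from the approximation estimate \eqref{Inferr} just established. First I would set up the functional-analytic framework: the bilinear forms $\mathcal{A}(\psi,\phi)=(\Delta\psi,\Delta\phi)$ and $\mathcal{B}(\psi,\phi)=(\nabla\psi,\nabla\phi)$ are symmetric and coercive on $H_0^2(\Omega)$ (coercivity of $\mathcal{A}$ because $\|\Delta\cdot\|$ is a norm on $H_0^2(\Omega)$ equivalent to $\|\cdot\|_{2,\Omega}$; $\mathcal{B}$ is the $H_0^1$ semi-norm), so \eqref{weak0} is a well-posed self-adjoint eigenvalue problem with a discrete spectrum $0<\lambda_1\le\lambda_2\le\cdots\to\infty$ and $\mathcal{A}$-orthogonal (and $\mathcal{B}$-orthonormal) eigenfunctions. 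The discrete problem \eqref{Galellipse} is the Ritz--Galerkin restriction to $X_N\subset H_0^2(\Omega)$, so the approximate eigenvalues satisfy the min--max characterization and in particular $\lambda_k\le\lambda_{j,N}$ for $j=k,\dots,k+q-1$, which gives the lower bound in the first assertion for free.

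Next I would quote the Babu\v{s}ka--Osborn error bounds in the form they take for the Ritz method. Introducing the $\mathcal{A}$-orthogonal projection $P_N:H_0^2(\Omega)\to X_N$ and the quantity
\begin{align*}
\varepsilon_N(\lambda_k) := \sup_{\psi\in E(\lambda_k)} \inf_{v\in X_N} \|\psi-v\|_{2,\Omega},
\end{align*}
the abstract theory yields $\lambda_{j,N}-\lambda_k \le C\,\varepsilon_N(\lambda_k)^2$ for the eigenvalues, and $\inf_{u\in E(\lambda_k)}\|u-\psi_{j,N}\|_{2,\Omega}\le C\,\varepsilon_N(\lambda_k)$ for the individual eigenfunctions, together with the existence of the combination $v_N$ approximating a prescribed $\psi_k$ at the same rate; the last two follow because the gap between $\lambda_k$ and the rest of the spectrum is fixed, so the spectral projectors converge. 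Thus everything reduces to bounding $\varepsilon_N(\lambda_k)$.

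The remaining step is to estimate $\inf_{v\in X_N}\|\psi-v\|_{2,\Omega}$ for an eigenfunction $\psi\in E(\lambda_k)$. Since $\|\cdot\|_{2,\Omega}$ and $\|\Delta\cdot\|$ are equivalent on $H_0^2(\Omega)$, Theorem~\ref{Inferr} (the inequality $\inf_{v\in X_N}\|\Delta(u-v)\|\lesssim N^{2-s}|u|_{s,*,\Omega}$) gives immediately $\inf_{v\in X_N}\|\psi-v\|_{2,\Omega}\lesssim N^{2-s}|\psi|_{s,*,\Omega}$, provided $\psi\in H^s(\Omega)$; taking the supremum over the finite-dimensional eigenspace $E(\lambda_k)$ yields $\varepsilon_N(\lambda_k)\lesssim N^{2-s}\sup_{\psi\in E(\lambda_k)}|\psi|_{s,*,\Omega}$. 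Substituting into the Babu\v{s}ka--Osborn bounds gives $\lambda_{j,N}-\lambda_k\lesssim N^{4-2s}\sup_{\psi\in E(\lambda_k)}|\psi|_{s,*,\Omega}^2$ and the two $O(N^{2-s})$ eigenfunction estimates, which is the claim. The main obstacle, and the only place where genuine care is needed, is confirming that the hypotheses of the abstract theory in \cite{babuska1991}---compactness of the solution operator associated with $\mathcal{B}$ relative to $\mathcal{A}$, self-adjointness, and the ``uniform convergence of solution operators'' condition that underlies the spectral-projector estimates---are met in this weighted setting; once $X_N\subset H_0^2(\Omega)$ and the density $\bigcup_N X_N$ is dense in $H_0^2(\Omega)$ (which follows from $\PP_{\min(N,N/2+4)}^{-2}(D)\subset X_N$ and the polynomial density argument in the proof of Theorem~\ref{Inferr}), these are standard, and the regularity $\psi\in H^s(\Omega)$ with controlled $|\psi|_{s,*,\Omega}$ is exactly the smoothness hypothesis implicitly assumed on the eigenfunctions.
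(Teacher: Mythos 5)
Your proposal matches the paper's approach exactly: the paper gives no explicit proof, simply citing the Babu\v{s}ka--Osborn theory for the Ritz method applied to self-adjoint positive-definite eigenvalue problems and combining it with the approximation estimate \eqref{Inferr} established in the preceding theorem. Your write-up fills in the verification of the abstract hypotheses that the paper leaves implicit, but the route is the same.
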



\section{Numerical experiments}
\indent We now perform a sequence of numerical tests to study the convergence behavior
and show the effectiveness of our algorithm. We operate our programs in MATLAB 2015b.
\subsection{Circular disk}
\subsubsection{Spectral analysis}
We now turn to the spectral decomposition of \eqref{eqn:eigensys4}--\eqref{eqn:eigensys5}.
Under the polar coordinates, we first  reformulate \eqref{eqn:eigensys4} as follows,
\begin{align}
\label{eq:4thpolar}
\big[ r^4 \partial_r^2 + 2r^3\partial_r^3 + (\lambda r^2+ 2\partial_{\theta}^2-1) r^2\partial_r^2+ (\lambda r^2   - 2\partial_{\theta}^2+1)  r\partial_r
 + ( \lambda r^2 +4+\partial_{\theta}^2) \partial_{\theta}^2\big]\psi(r,\theta) = 0.
\end{align}
We next expand  $\psi$  in the Fourier series in $\theta$,
\begin{align*}
  \psi(r,\theta) =  \sum_{m=-\infty}^{\infty} \psi_m(r) e^{im\theta}.
\end{align*}
Then \eqref{eq:4thpolar} is reduced to 
\begin{align}
\label{eq:4thpolar2}
\begin{split}
\big[ r^4 \partial_r^2 + 2r^3\partial_r^3 + (\lambda r^2- 2m^2-1) r^2\partial_r^2+ (\lambda r^2   + 2m^2+1)  r\partial_r
\\
 - ( \lambda r^2 +4-m^2)m^2\big]\psi_m(r) = 0, \quad \forall m \in \mathbb{Z},
 \end{split}
\end{align}
which, together with the pole conditions \eqref{cond3}-\eqref{cond5}, admits a general solution,
\begin{align*}
       \psi_m(r)  = c_{m,1}  (\sqrt{\lambda}r)^m + c_{m,2} J_m(\sqrt{\lambda}r).
\end{align*}

Meanwhile, the boundary conditions  $ \psi_m(1)=\psi_m'(1)=0$  imply that
\begin{align}
\label{eq:system}
\begin{cases}
c_{m,1}   \sqrt{\lambda}^m  + c_{m,2} J_m(\sqrt{\lambda}) =0 ,\\
c_{m,1}  m \sqrt{\lambda}^m + c_{m,2} \sqrt{\lambda}J_m'(\sqrt{\lambda}) =0,
\end{cases}
\end{align}
with some nontrivial $c'$s. As a results, the determinant 
\begin{align}
\label{lambda}
 \lambda^{\frac{m}{2}} \big[  \sqrt{\lambda}J_m'(\sqrt{\lambda}) - m J_m(\sqrt{\lambda})\big]
=- \lambda^{\frac{m+1}{2}} J_{m+1}(\sqrt{\lambda})=0,
\end{align}
where the second equality sign is derived from the recurrence relation (4) in Page 45 of \cite{Watson}.
 In return, 
 the fundamental solution of \eqref{eq:system}
determines the corresponding eigenfunction of  \eqref{eqn:eigensys4},
\begin{align*}
\psi_{m}(r,\theta) = \big[J_m(\sqrt{\lambda_{m}}) (\sqrt{\lambda_{m}}\,r)^m -  \sqrt{\lambda_{m}}^m 
J_m(\sqrt{\lambda_{m}}\,r)\big] \e^{\i m\theta}.
\end{align*}

Finally, we note that the nontrivial roots of \eqref{lambda} define a  sequence of increasingly ordered eigenvalues $0<\lambda_{m}^1<\lambda_{m}^2 <\lambda_{m}^3<\dots$, which are exactly the eigenvalues of the following second-order equation 
stemmed from the Laplacian eigenvalue problem on the unit disk,
$$
\Big(\partial_r^2 +\frac{1}{r}\partial_r -\frac{(1+|m|)^2}{r^2} \Big) \phi_m = \mu_m\phi_m.
$$

\subsubsection{Numerical results}

We take  $m=0,1,2$ as our examples.   The numerical results of first four eigenvalues for different $m$ and $N$ are listed in Table 5.1-5.4.
\begin{center}{\small\bf Table 5.1 The first four eigenvalues for $m=0$ and different $N$
in the unit disk. }
\begin{tabular}{{ccccc}}
  \hline
\multicolumn{1}{c} N&$\lambda_{0N}^1$&      $\lambda_{0N}^2$&    $\lambda_{0N}^3$&   $\lambda_{0N}^4$\\ \hline
\multicolumn{1}{c}{10}&    14.6819706421365&  49.2184567483993&       103.5024835613828&  177.6009453441972\\
\multicolumn{1}{c}{20}&   14.6819706421239&   49.2184563216945&    103.4994538951366& 177.5207668138042\\
\multicolumn{1}{c}{30} &  14.6819706421239&  49.2184563216945&     103.4994538951365&  177.5207668138044\\
  \hline
\end{tabular}\end{center}
\begin{center}{\small\bf Table 5.2 The first four eigenvalues for $m=1$ and different $N$ in 
the unit disk. }
\begin{tabular}{{ccccc}}
  \hline
\multicolumn{1}{c} N&$\lambda_{1N}^1$&      $\lambda_{1N}^2$&    $\lambda_{1N}^3$&   $\lambda_{1N}^4$\\ \hline
\multicolumn{1}{c}{20}&   26.3746164271634&   70.8499989190960&    135.0207088659703& 218.9201891456649\\
\multicolumn{1}{c}{30} & 26.3746164271634&  70.8499989190958&     135.0207088659704&  218.9201891456624\\
\multicolumn{1}{c}{40} & 26.3746164271634&  70.8499989190957&     135.0207088659696&  218.9201891456631\\
\multicolumn{1}{c}{50} & 26.3746164271634&  70.8499989190957&     135.0207088659700&  218.9201891456630\\
  \hline
\end{tabular}\end{center}
\begin{center}{\small\bf Table 5.3 The first four eigenvalues for $m=2$ and different $N$
in the unit disk.}
\begin{tabular}{{ccccc}}
  \hline
\multicolumn{1}{c} N&$\lambda_{2N}^1$&      $\lambda_{2N}^2$&    $\lambda_{2N}^3$&   $\lambda_{2N}^4$\\ \hline
\multicolumn{1}{c}{30} &  40.7064658182003&  95.2775725440372&    169.3954498260997&  263.2008542550081\\
\multicolumn{1}{c}{40} & 40.7064658182002&  95.2775725440371&    169.3954498260988&  263.2008542550071\\
\multicolumn{1}{c}{50} & 40.7064658182004&  95.2775725440372&    169.3954498260995&  263.2008542550078\\
\multicolumn{1}{c}{60} & 40.7064658182003&  95.2775725440370&    169.3954498260994&  263.2008542550076\\
  \hline
\end{tabular}\end{center}

We know from { Tables 5.1--5.3 that numerical} eigenvalues achieve at least fourteen-digit accuracy with $N\geq 20$ { for $m=0$ and $N\geq 40$ for $m=1,2$, respectively}. If we choose the { numerical solutions with} $N = 60$ as reference solutions, the error figures of the approximate eigenvalue {$\lambda_{mN}^i(m=0,1,2; i=1,2,3,4)$}  with different $N$ are listed in {Figures 1-3}.

 It is worthy to note that, when imposing the pole conditions $\psi_m'(0)=\psi_m(0)=0$ 
as in \cite{shen1997} for $m=1$,
one would necessarily get spurious eigenvalues even for large $N,$ which can only serve as  upper bounds of each the exact ones.  
For instance, the first computational eigenvalue in this case reads $28.7378$, a number  far away from the reference one $26.3746$.

\begin{figure}[h!]
\begin{minipage}[c]{0.48\textwidth}
\centering
\includegraphics[width=1\textwidth]{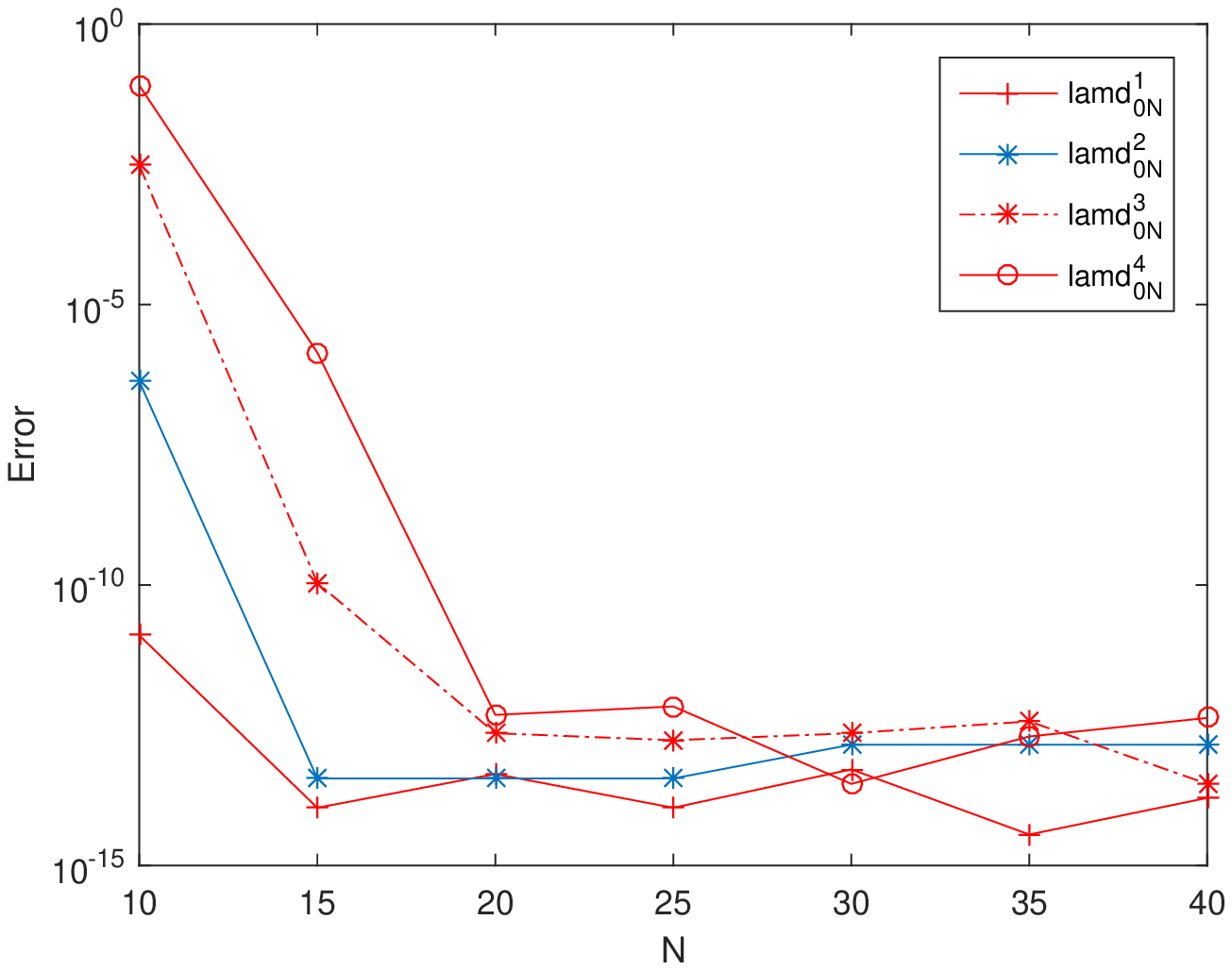}
\caption{Errors between numerical solutions and the reference solution for $m=0$.}\label{fig1}
\end{minipage}\hfill%
\begin{minipage}[c]{0.48\textwidth}
\centering
\includegraphics[width=1\textwidth]{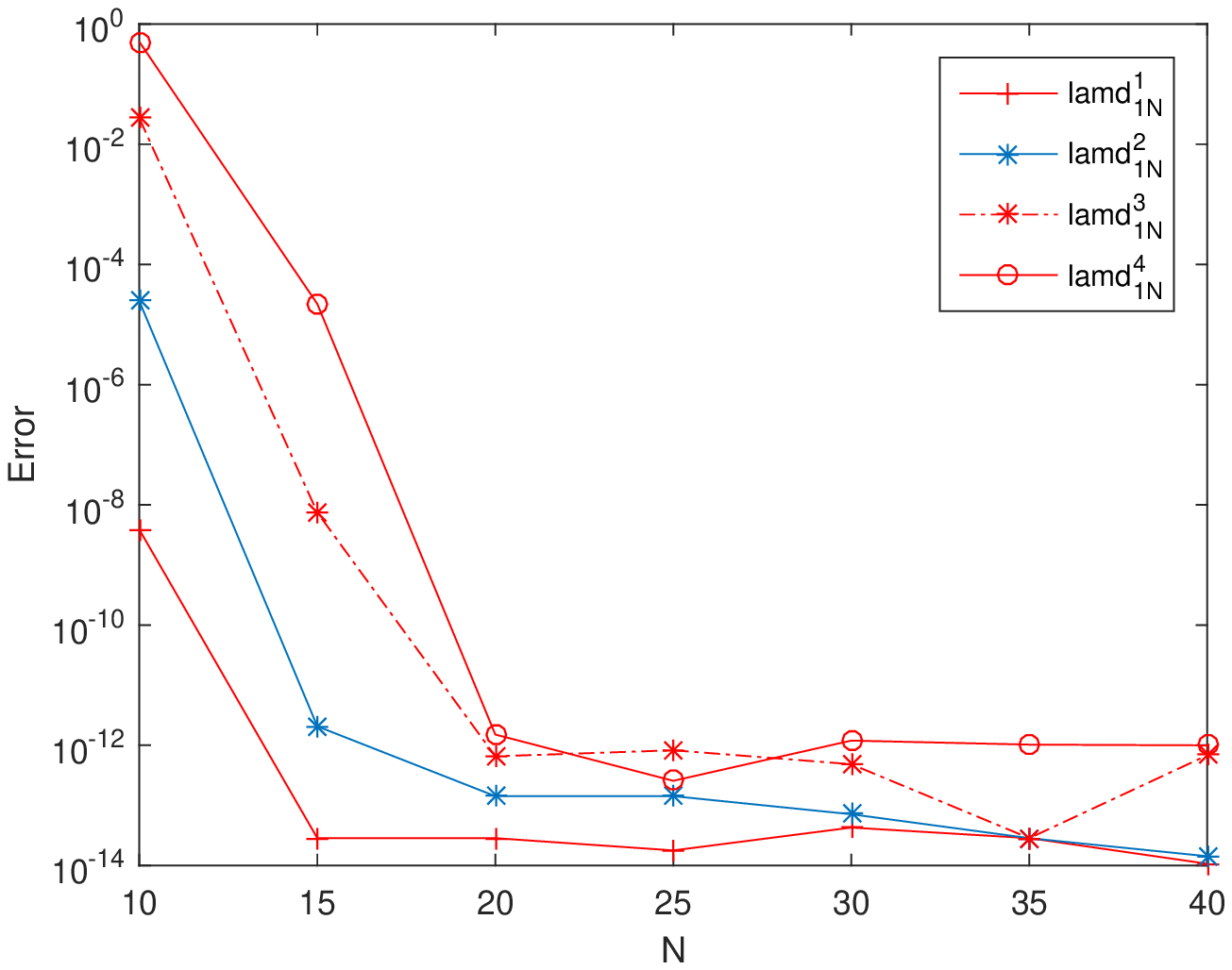}
\caption{Errors between numerical solutions and the reference solution for $m=1$.}\label{fig2}
\end{minipage}\hfill%
\end{figure}

\begin{figure}[h!]
\begin{minipage}[c]{0.48\textwidth}
\centering
\includegraphics[width=1\textwidth]{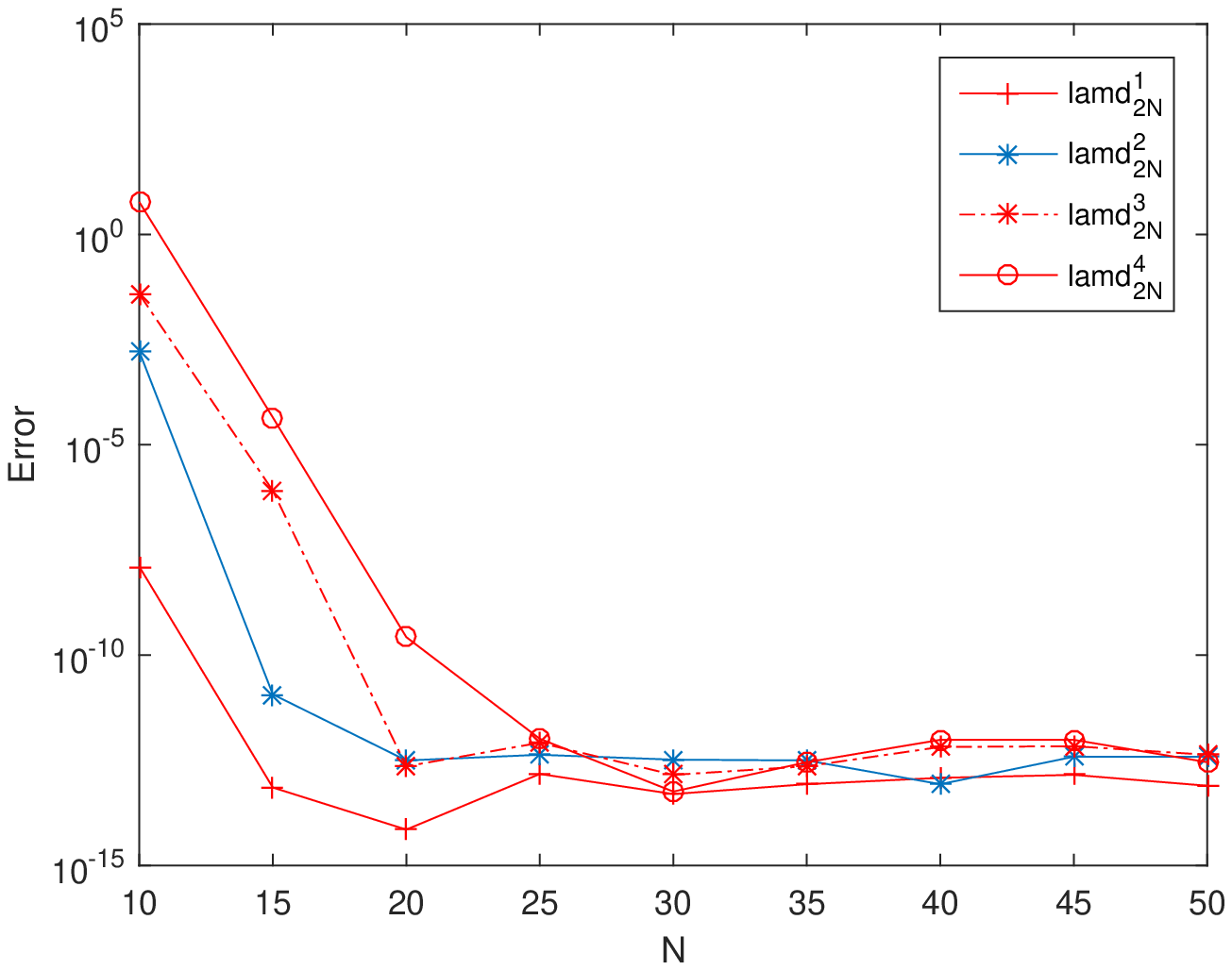}
\caption{Errors between numerical solutions and the reference solution for $m=2$ on the unit disk.}\label{fig3}
\end{minipage}\hfill%
\begin{minipage}[c]{0.48\textwidth}
\centering
\includegraphics[width=1\textwidth]{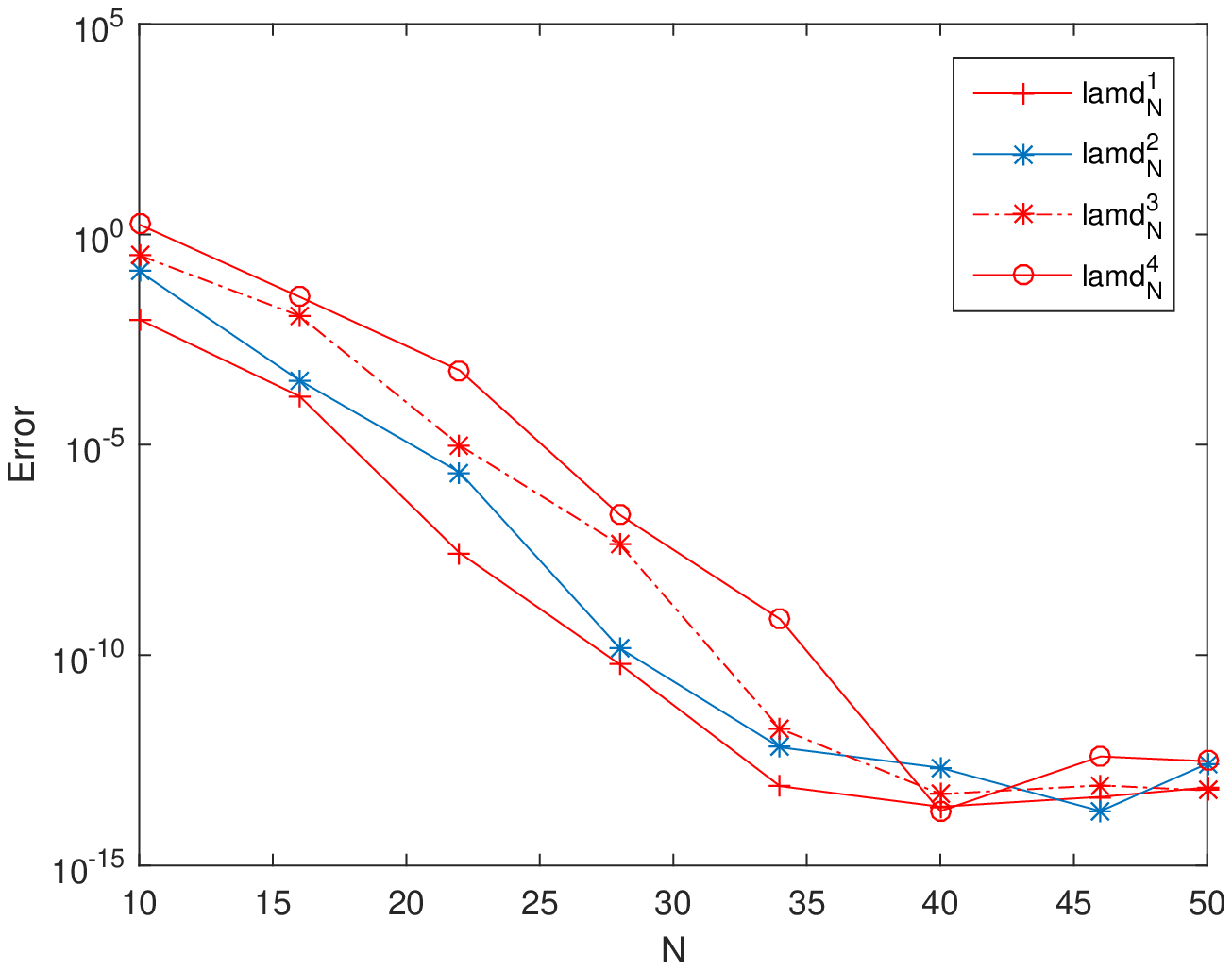}
\caption{Errors between numerical solutions and the reference solution on the elliptic domain.}\label{fig4}
\end{minipage}\hfill%
\end{figure}

\subsection{Elliptic domain }
We take $a=3, b=1$ as our example. The numerical {data of the } first four eigenvalues { are listed in Table 5.4. We see} that the eigenvalues achieve at least fourteen-digit accuracy with $N\geq 40$. If we choose the solutions of $N = 60$ as reference solutions, the error figures of the approximate eigenvalue $\lambda_{N}^i(i=1,2,3,4) $  with different $N$ are listed in { Figure 4}.
\begin{center}{\small\bf Table 5.4 The first four eigenvalues for different $N$
in an elliptic domain with  $a=3, b=1$. }
\begin{tabular}{{ccccc}}
  \hline
\multicolumn{1}{c} N&$\lambda_{N}^1$&      $\lambda_{N}^2$&    $\lambda_{N}^3$&   $\lambda_{N}^4$\\ \hline
\multicolumn{1}{c}{20} &  9.96633619654313 &  11.0706597920227&    13.1630821009849&    15.6448857440637 \\
\multicolumn{1}{c}{30} &  9.9663343484475 &   11.0706554383893&    13.1627539459867&    15.6437495616386 \\
\multicolumn{1}{c}{40} &  9.96633434844728 &  11.0706554383168&   13.1627539455290&     15.6437494538630 \\
\multicolumn{1}{c}{50} &  9.96633434844729 &  11.0706554383167&     13.1627539455291&    15.6437494538630\\
\multicolumn{1}{c}{60} &  9.96633434844726 &  11.0706554383166&   13.1627539455290&    15.6437494538630 \\
  \hline
\end{tabular}\end{center}

 Before concluding this section, we would like to present some figures of the (real) eigenfunctions corresponding to the
smallest 8 eigenvalues in Figures \ref{fig:psi1}-\ref{fig:psi8}.

\begin{figure}[h!]
\begin{minipage}[c]{0.48\textwidth}
\centering
\includegraphics[width=1\textwidth]{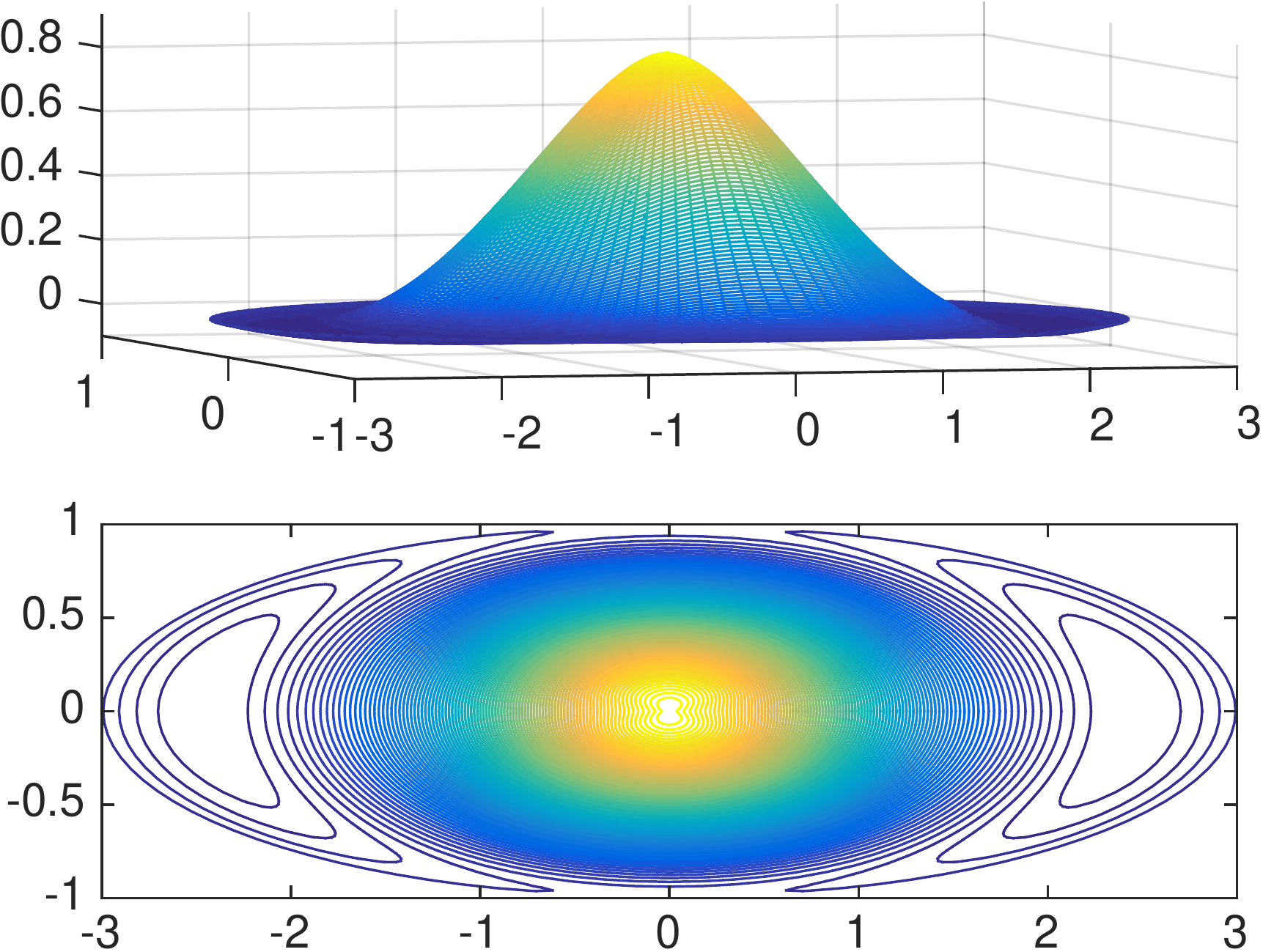}
\caption{Mesh and contour of $\psi_1$.}\label{fig:psi1}
\end{minipage}\hfill%
\begin{minipage}[c]{0.48\textwidth}
\centering
\includegraphics[width=1\textwidth]{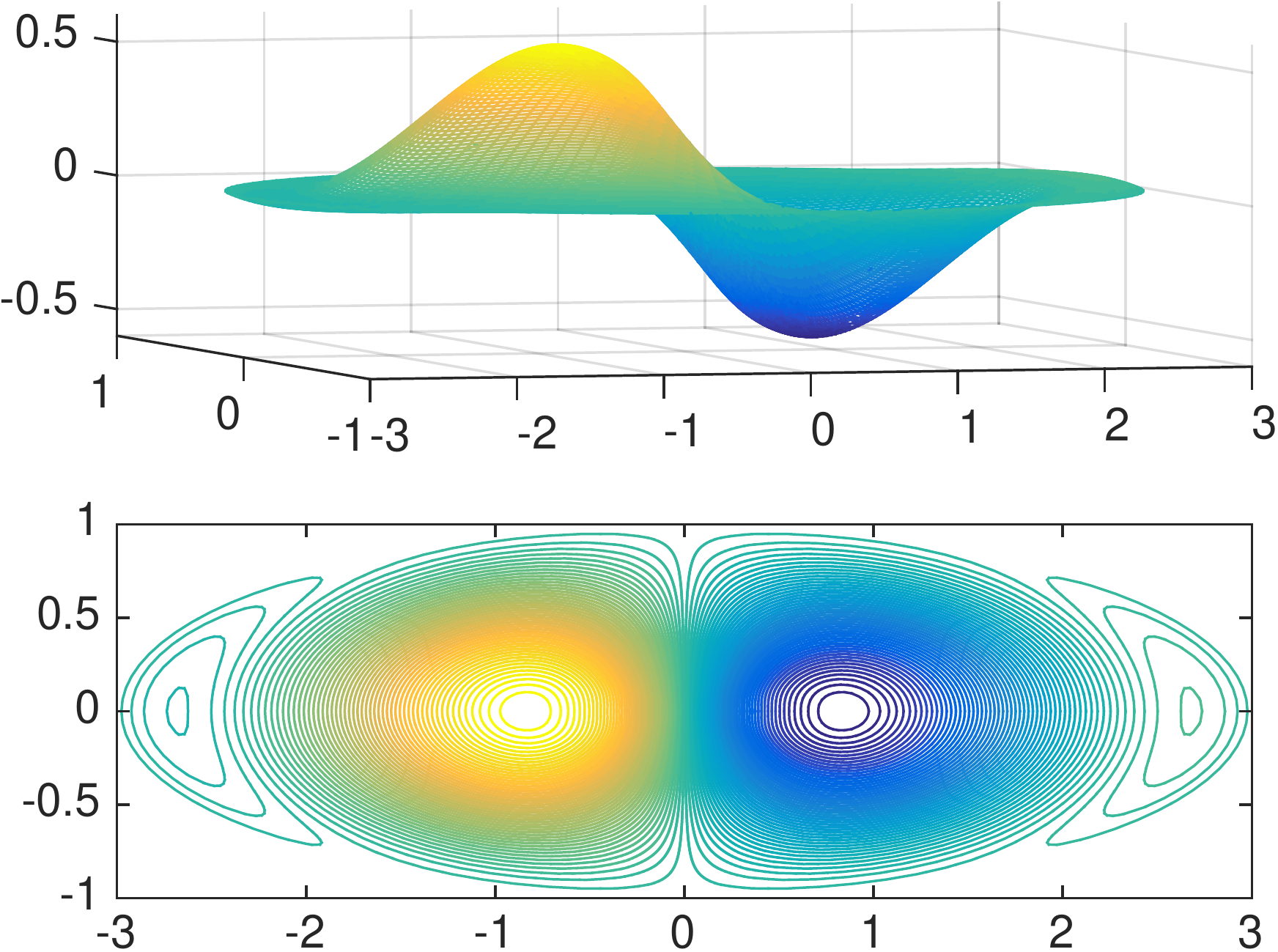}
\caption{Mesh and contour of $\psi_2$.}\label{fig:psi2}
\end{minipage}\hfill%
\end{figure}
\begin{figure}[h!]
\begin{minipage}[c]{0.48\textwidth}
\centering
\includegraphics[width=1\textwidth]{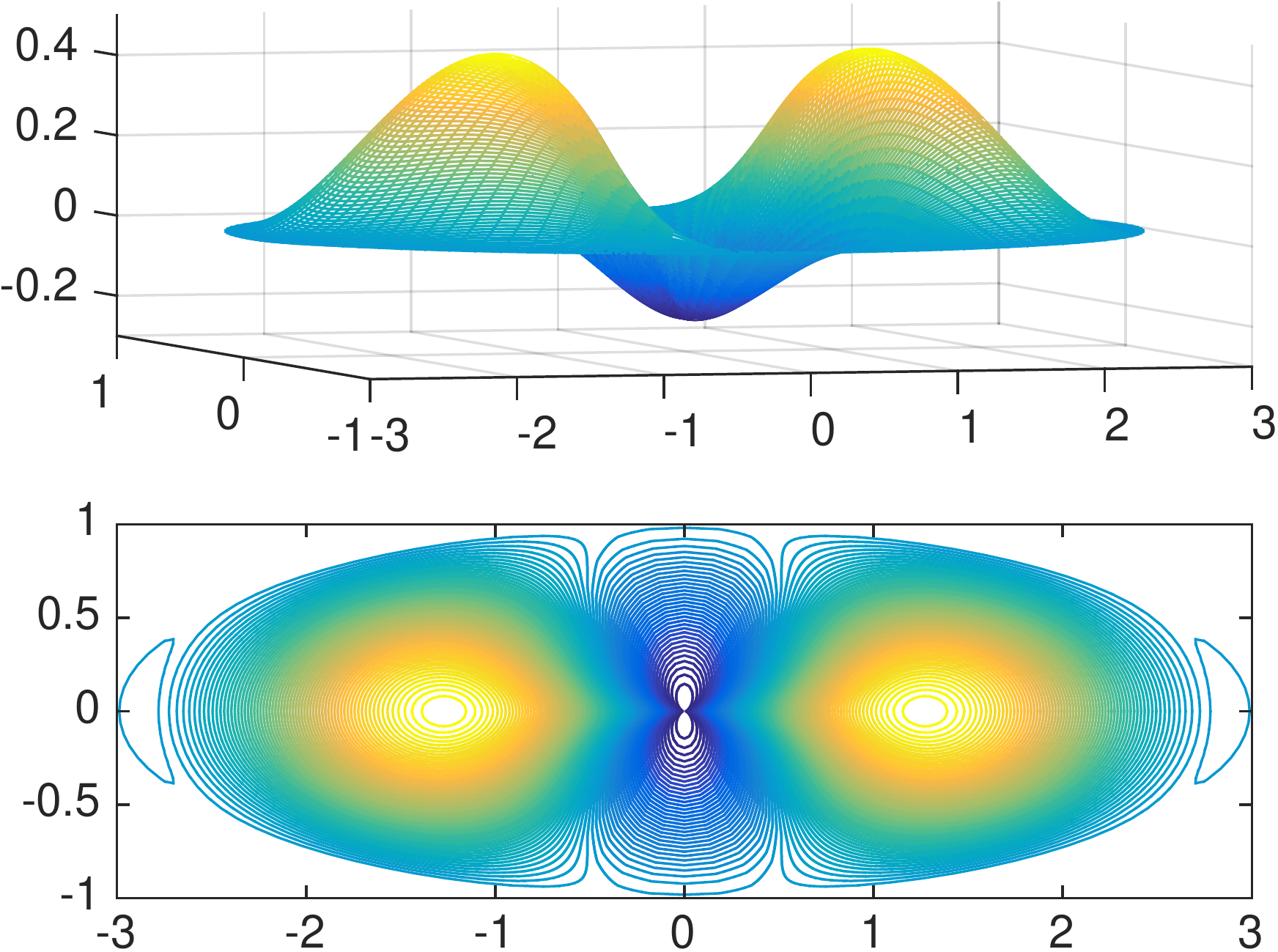}
\caption{Mesh and contour of $\psi_3$.}\label{fig:psi3}
\end{minipage}\hfill%
\begin{minipage}[c]{0.48\textwidth}
\centering
\includegraphics[width=1\textwidth]{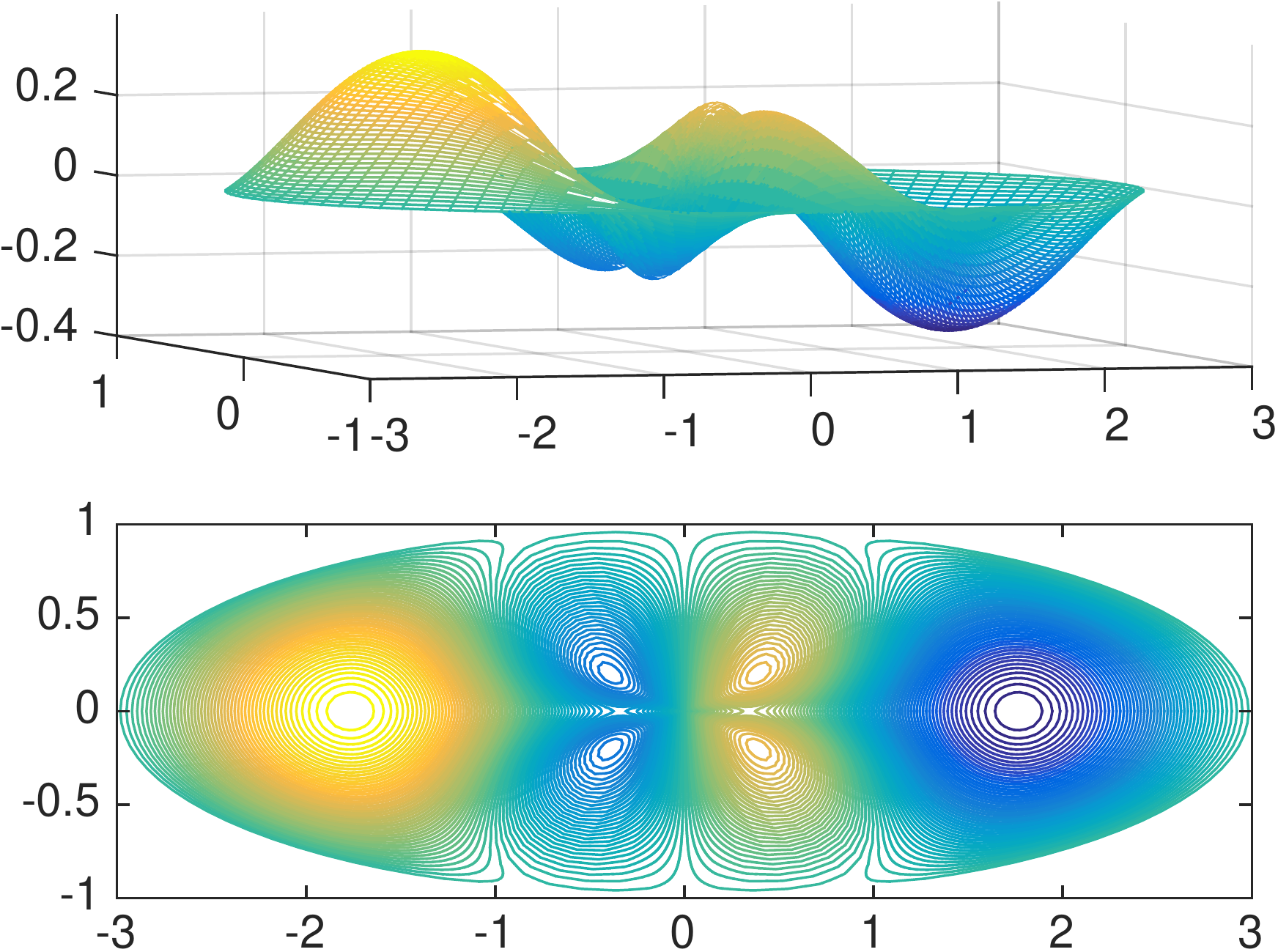}
\caption{Mesh and contour of $\psi_4$.}\label{fig:psi4}
\end{minipage}\hfill%
\end{figure}
\begin{figure}[h!]
\begin{minipage}[c]{0.48\textwidth}
\centering
\includegraphics[width=1\textwidth]{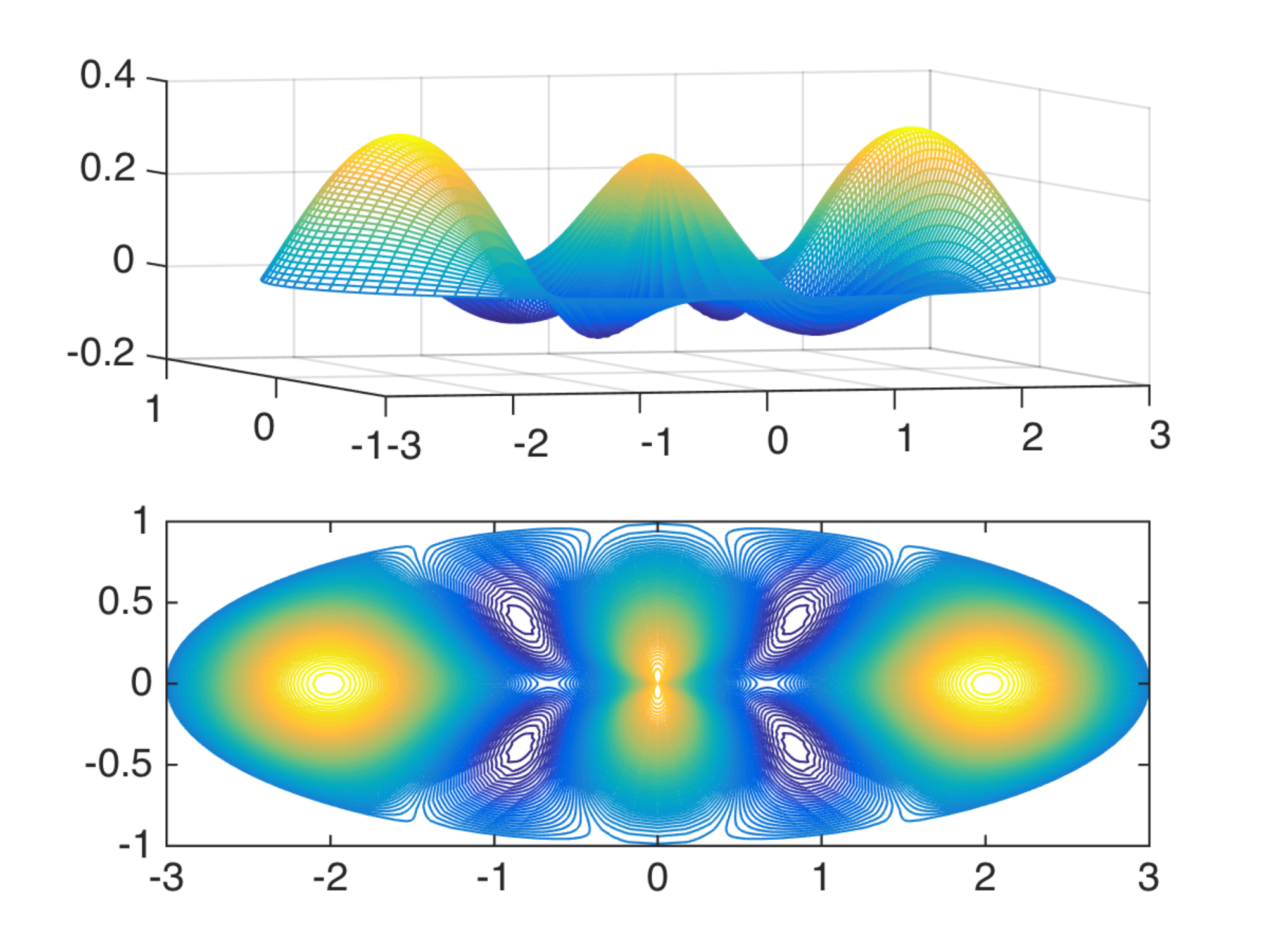}
\caption{Mesh and contour of $\psi_5$.}\label{fig:psi5}
\end{minipage}\hfill%
\begin{minipage}[c]{0.48\textwidth}
\centering
\includegraphics[width=1\textwidth]{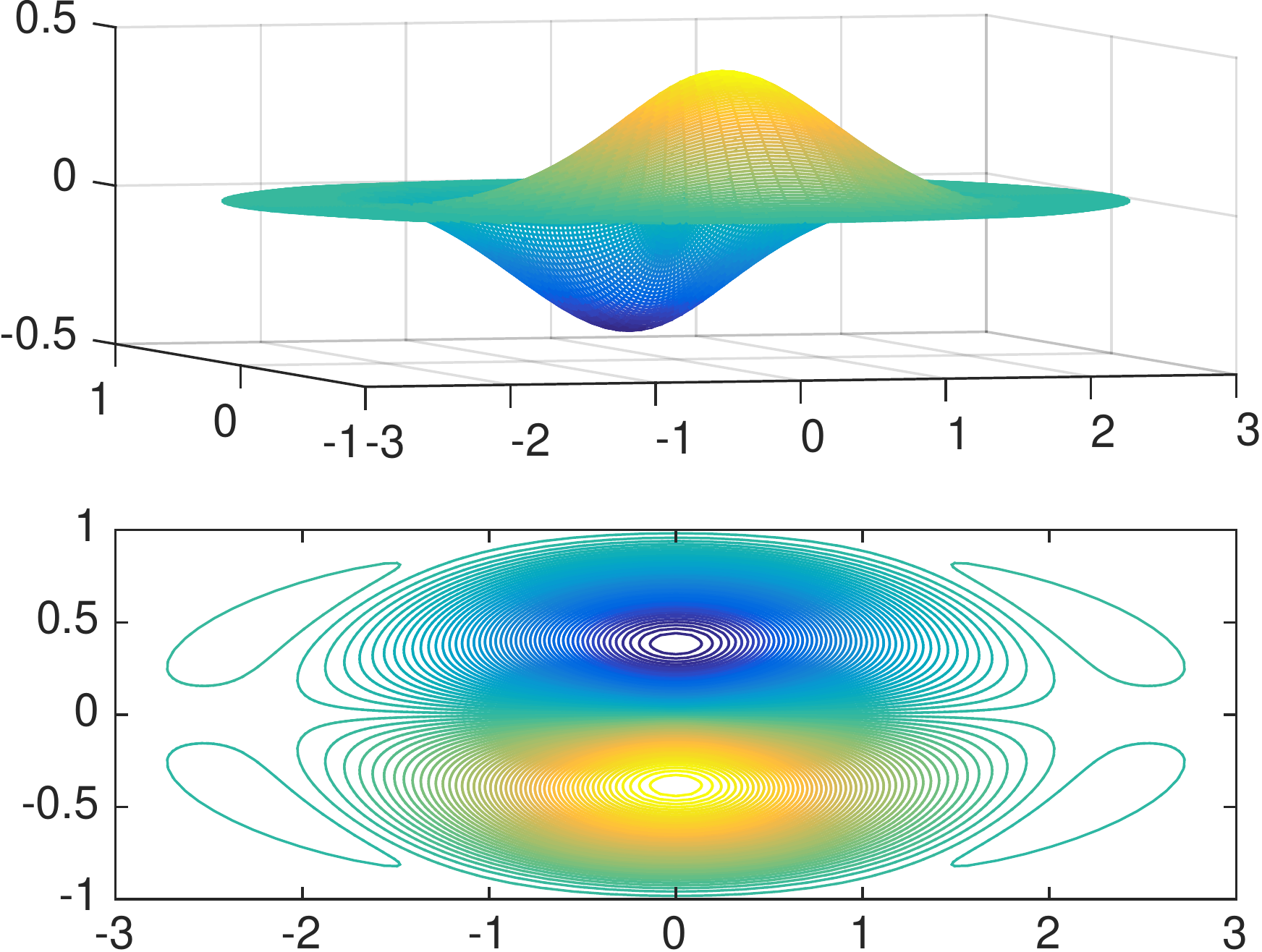}
\caption{Mesh and contour of $\psi_6$.}\label{fig:psi6}
\end{minipage}\hfill%
\end{figure}
\begin{figure}[h!]
\begin{minipage}[c]{0.48\textwidth}
\centering
\includegraphics[width=1\textwidth]{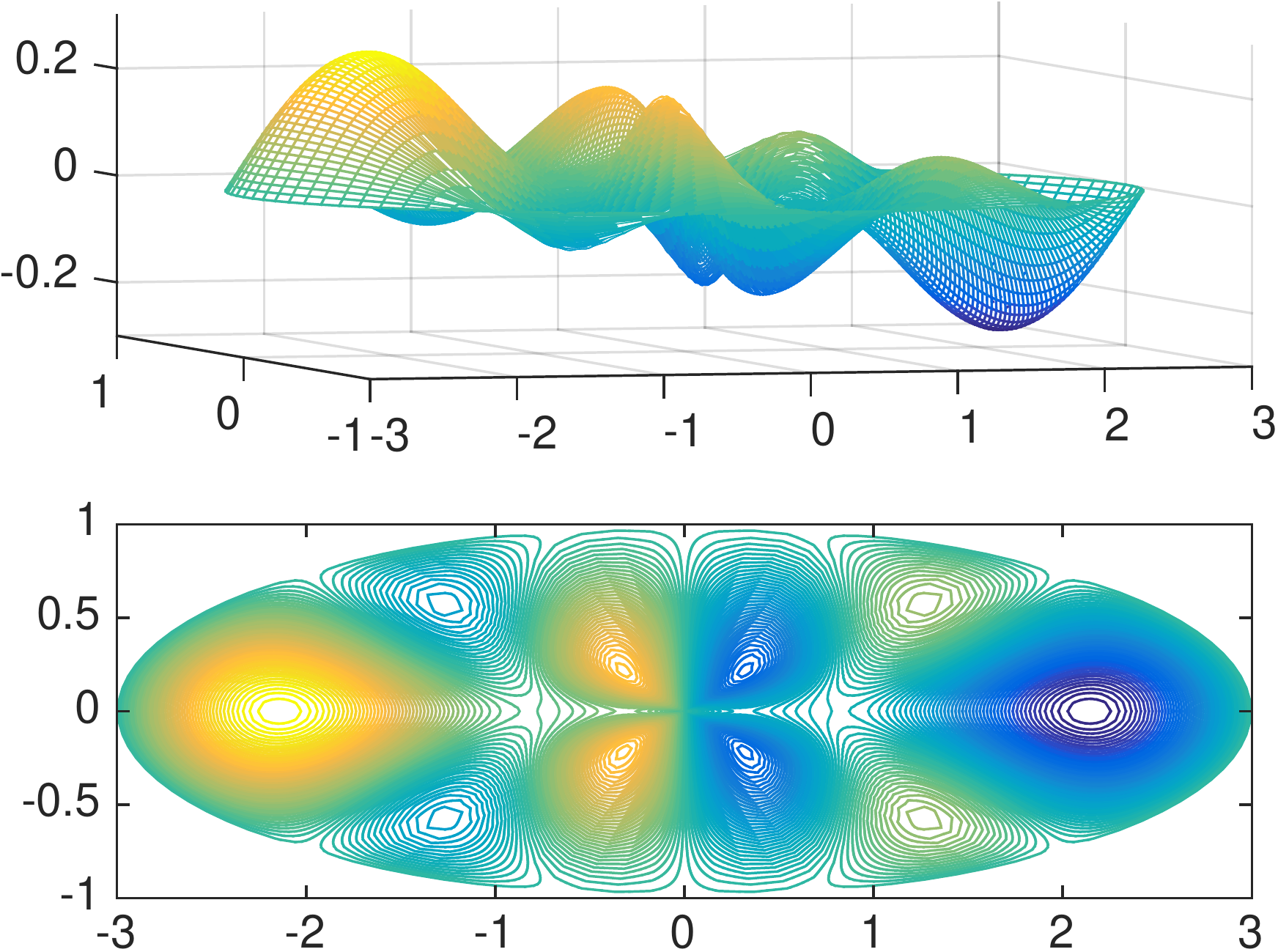}
\caption{Mesh and contour of $\psi_7$.}\label{fig:psi7}
\end{minipage}\hfill%
\begin{minipage}[c]{0.48\textwidth}
\centering
\includegraphics[width=1\textwidth]{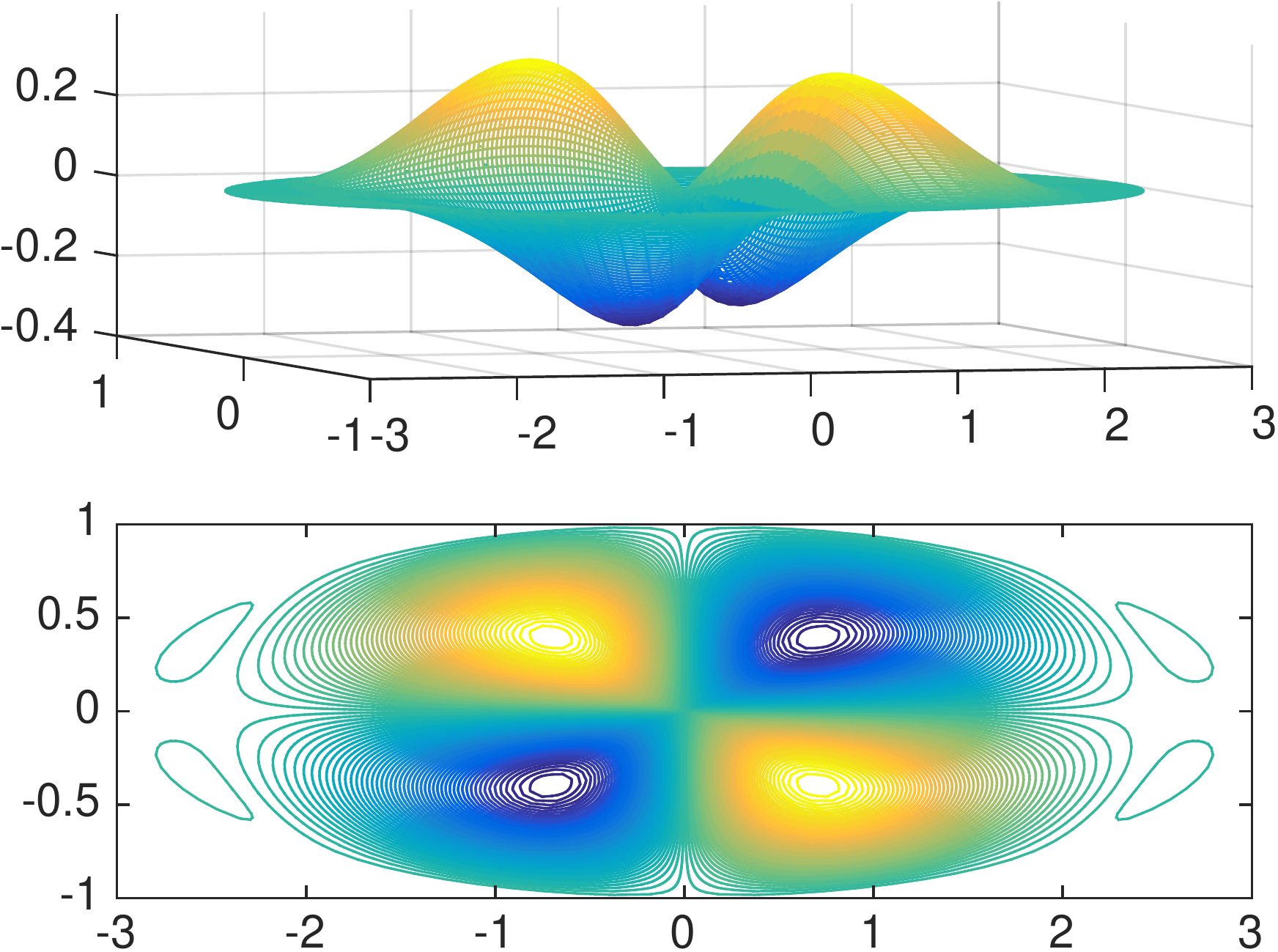}
\caption{Mesh and contour of $\psi_8$.}\label{fig:psi8}
\end{minipage}\hfill%
\end{figure}

\section{Conclusions}\label{conc}
We present a rigorous error analysis { for our proposed spectral-Galerkin methods in solving} the Stokes eigenvalue problem {under} the stream function formulation in polar geometries. We derive the essential pole condition and reduce the problem to
 a sequence of one-dimensional eigenvalue problems that can be solved individually in parallel. 
 { Spectral accuracy is achieved by properly designed non-polynomial basis functions and the exponential rate of convergence is established  by introducing a suitable weighted Sobolev space; all based on the correct pole condition. To the best of out knowledge, the pole condition and such kind of usage of weighted Sobolev space and basis functions are all for the first time in the literature.}
 Our spectral-Galerkin method is {also} extended to {solve} the stream function formulation
 of {the Stokes eigenvalue problem on an elliptic} region, which also indicates  the capability of our method to solve { fourth-order} equations
 on other smooth domains. Numerical experiments in the last section have validated the theoretical results and algorithms.
{As we can see, on special domains such as circular disks and elliptic regions, with only less than 50 degrees of unknowns, the proposed spectral method can achieve 14-digits accuracy for the first few eigenvalues of the Stokes problem, this is far more superior to traditional methods such as finite element and finite difference methods. }


\appendix 
\section{Jacobi and generalized Jacobi polynomials}
\label{App:A}
The classical Jacobi polynomials $J_k^{\alpha,\beta}(\zeta)$, $ k \ge 0$  with $\alpha,\beta> -1$ are
mutually orthogonal with respect to the Jacobi weight function $\chi^{\alpha,\beta}:=\chi^{\alpha,\beta}
(\zeta) = (1-\zeta)^{\alpha} (1+\zeta)^{\beta}$ on $\Lambda=(-1,1)$,
\begin{align}
\label{Jorth}
        \int_{-1}^{1} J_m^{\alpha,\beta}(\zeta) J_n^{\alpha,\beta}(\zeta) \chi^{\alpha,\beta}(\zeta)
        d\zeta = \frac{2^{\alpha+\beta+1}}
  {{2n+\alpha+\beta+1}}\, h^{\alpha,\beta}_{n}\,  \delta_{m,n}, \quad
        m,n\ge 0,
\end{align}
where $\delta_{m,n}$ is the Kronecker delta, and
\begin{align}
\label{Jnorm}
 h^{\alpha,\beta}_{n} := \frac{\Gamma(n+\alpha+1)\Gamma(n+\beta+1)}
  {\Gamma(n+1)\Gamma(n+\alpha+\beta+1)}.
\end{align}
For $k\in \ZZ$, denote  by $(a)_k=\frac{\Gamma(a+k)}{\Gamma(a)}$
the Pochhammer symbol.
The classical Jacobi polynomials possess  the following important representation
\begin{align}
\label{Jexpand}
 &{J}^{\alpha,\beta}_n(\zeta) =
  \sum_{k=0}^n \frac{(-n-\beta)_{n-k}   (n+\alpha+\beta+1)_k  }{(n-k)!k!} 
  \Big(\frac{\zeta+1}{2}\Big)^k,
\end{align}
which  symbolically furnishes the extension  of $J^{\alpha,\beta}_n(\zeta)$ to arbitrary  $\alpha$ and $\beta$.
Generalized Jacobi polynomials preserve most of the essential  properties of the 
classic Jacobi polynomials, among which the following identities \cite{Szego} are of importance in the current paper,
\begin{align}
\label{symm}
&J^{\alpha,\beta}_n(-\zeta)  =(-1)^n J^{\beta,\alpha}_n(\zeta),
\\
\label{diff}
&\partial_{\zeta} J^{\alpha,\beta}_n(\zeta) = \frac{n+\alpha+\beta+1}{2}
J^{\alpha+1,\beta+1}_{n-1}(\zeta),
\\
\label{Jexpand2}
&J^{\alpha,\beta}_n(\zeta) 
 = \sum_{\nu=n-k}^{n} \frac{2\nu+\alpha+\beta+k+1}{(n+\nu+\alpha+\beta+1)_{k+1}}
\frac{(\nu+\beta+1)_{n-\nu}}{(\nu+\alpha+\beta+k+1)_{n-\nu-k}} \frac{(-k)_{n-\nu}}{(n-\nu)!}J^{\alpha+k,\beta}_{\nu}(\zeta),
\\
\label{rJ}
& (1+\zeta)J^{\alpha,\beta+1}_n(\zeta) = \frac{2(n+\beta+1)}{2n+\alpha+\beta+2} J^{\alpha,\beta}_{n}(\zeta)
+  \frac{2(n+1)}{2n+\alpha+\beta+2} J^{\alpha,\beta}_{n+1}(\zeta).
\end{align}

In particular, the generalized Jacobi polynomials with $\alpha$ and/or $\beta$ being integers are our greatest interest
\cite{LS09},
\begin{align}
  \label{eq:gjp}
  \begin{split}
  J_n^{\alpha,\beta}(\zeta)
  &=\begin{cases}   \big(\frac{\zeta-1}{2}\big)^{-\alpha}  \big(\frac{\zeta+1}{2}\big)^{-\beta}
   J_{n+\alpha+\beta}^{-\alpha, -\beta}(\zeta), & \alpha,\beta\in \ZZ,\ 
   n+\alpha+\beta\in \NN_0,\\[0.4em]
   h^{\alpha,\beta}_n
   \big(\frac{\zeta-1}{2}\big)^{-\alpha}
   J_{n+\alpha}^{-\alpha, \beta}(\zeta), & \alpha\in \ZZ,\ 
   n+\alpha\in \NN_0, \\[0.4em]
   h^{\alpha,\beta}_n
   \big(\frac{\zeta+1}{2}\big)^{-\beta}
   J_{n+\beta}^{\alpha, -\beta}(\zeta), & \beta\in \ZZ, \
    n+\beta\in \NN_0.
  \end{cases}
  \end{split}
\end{align}
The generalized Jacobi polynomials with
negative indices not only simplify the numerical analysis for the
spectral approximations of differential equations, but also lead to
very efficient numerical algorithms \cite{GSW06,shen2011}.

Finally, it is worthy to point out that a reduction of the degree of $J^{\alpha,\beta}_n(\zeta)$ occurs if and only if   $-n-\alpha-\beta\in \{1,2,\dots,n\} $,
\begin{align}
   \label{Jc}
      &J^{\alpha,\beta}_n(\zeta) =  
            h^{\alpha,n_0-n-1}_n
             J^{\alpha,\beta}_{n_0-1}(\zeta),
\end{align}
where $n_0:=-n-\alpha-\beta$ if $-n-\alpha-\beta\in \{1,2,\dots,n\} $ and $n_0:=0$ otherwise.

\section{Proof of Lemma \ref{Expan}}
\label{App:B}
At first,  \eqref{phi3d2}-\eqref{phi31} are trivial results on the Jacobi expansion.

By \eqref{symm}, \eqref{Jexpand2}, \eqref{eq:gjp}, one finds that, for $i\ge 4$,
\begin{align*}
&(1-t^2)^2 J^{2,1}_{i-4}(t) 
=   \frac12 (1-t^2)^2 \big[ \frac{i}{i-2} J^{2,2}_{i-4}(t)+J^{2,2}_{i-5}(t)  \big]
=  \frac{8  i}{i-2} J^{-2,-2}_{i}(t)+8(1-\delta_{i,4}) J^{-2,-2}_{i-1}(t).
\end{align*}
Then by \eqref{diff}, \eqref{symm} and \eqref{Jexpand2}, one derives
\begin{align*}
&\partial_t^2 [(1-t^2)^2 J^{2,1}_{i-4}(t) ] = 2i(i-3)J^{0,0}_{i-2}(t)+2(i-4)(i-3)J^{0,0}_{i-3}(t)
\\
&\quad =\frac{2i(i-3)}{2i-3} ((i-1)J^{0,1}_{i-2}(t)+(i-2) J^{0,1}_{i-3}(t) ) +\frac{2(i-4)(i-3)}{2i-5}
 ((i-2)J^{0,1}_{i-3}(t)+(i-3)J^{0,1}_{i-4}(t) ) 
\\
&\quad = {\frac {2 ( i-3 )  ( i-1 ) i}{(2i-3)}}  J^{0,1}_{i-2}(t) 
 +   {\frac {8 ( i-3 ) ^{2} ( i-2 )  ( i-1
 ) }{ ( 2i-3 )  ( 2i-5 ) }}  J^{0,1}_{i-3}(t)
+ {\frac { 2( i-4 )  ( i-3 ) ^{2}}{(2i-5)}}J^{0,1}_{i-4}(t)
,
\end{align*}
and
\begin{align*}
&\frac{1}{t+1}\partial_t [(1-t^2)^2 J^{2,1}_{i-4}(t) ] =\frac{1}{t+1}\big[ \frac{4  i (i-3)  }{i-2} J^{-1,-1}_{i-1}(t)+4(i-4) J^{-1,-1}_{i-2}(t) \big]
\\
&\quad =\big[ \frac{2  i (i-3)  }{i-2} J^{-1,1}_{i-2}(t)+\frac{2(i-4)(i-3)}{i-2} J^{-1,1}_{i-3}(t) \big]
\\
&\quad =\frac { 2( i-3 ) i}{(2i-3)} J^{0,1}_{i-2}(t) 
 - \frac {12 ( i-3 )  ( i-2 ) }{ ( 2i-3 )  ( 2i-5 ) } J^{0,1}_{i-3}(t)
- \frac {2 ( i-4 )  ( i-3 ) }{(2i-5)}J^{0,1}_{i-4}(t),
\end{align*}
which give \eqref{phid2} and \eqref{phid1} immediately.

Next by \eqref{eq:gjp} and \eqref{Jexpand2},
\begin{align*}
&\frac{1}{(t+1)^2}(1-t^2)^2 J^{2,1}_{i-4}(t) 
=(1-t)^2 J^{2,1}_{i-4}(t) = \frac{4(i-3)}{i-1}J^{-2,1}_{i-2}(t)
\\
&\qquad = \frac {2(i-3)}{2i-3}J^{0,1}_{i-2}(t) 
-\frac { 8 ( i-3 )  ( i-2 ) }{ ( 2i-3 )  ( 2i-5 ) }J^{0,1}_{i-3}(t)
+\frac {2(i-3)}{2i-5} J^{0,1}_{i-4}(t),
\end{align*}
which states \eqref{phid0}.

Further,  by \eqref{diff}, \eqref{symm} and \eqref{Jexpand2},
\begin{align*}
\partial_t[(1-&t^2)^2 J^{2,1}_{i-4}(t)] =\frac{4  i (i-3)  }{i-2} J^{-1,-1}_{i-1}(t)+4(i-4) J^{-1,-1}_{i-2}(t)
\\
&=\frac{2(i-3)i^2}{(2i-3)(2i-1)}  J^{0,1}_{i-1}(t)
+\frac{4(i-1)(i-3)(2i^2-7i+2)}{(2i-5)(2i-1)(2i-3)} J^{0,1}_{i-2}(t)
\\
&-\frac{8(i-2)(i-3)}{(2i-3)(2i-5)} J^{0,1}_{i-3}(t)
-\frac{4(2i^2-9i+6)(i-3)^2}{(2i-7)(2i-5)(2i-3)} J^{0,1}_{i-4}(t)
-\frac{2(i-3)(i-4)^2}{(2i-5)(2i-7)} J^{0,1}_{i-5}(t),
\end{align*}
and  by \eqref{eq:gjp}, \eqref{symm} and \eqref{Jexpand2},
\begin{align*}
(1-t)^2& (1+t)J^{2,1}_{i-4}(t) =8J^{-2,-1}_{i-1}(t)
= \frac{2i(i-3)}{(2i-1)(2i-3)} J^{0,1}_{i-1}(t)
-\frac{8(i-1)(i-3)}{(2i-1)(2i-3)(2i-5)} J^{0,1}_{i-2}(t)
\\
&-\frac{4(i-2)(i-3)}{(2i-3)(2i-5)} J^{0,1}_{i-3}(t)
+\frac{8(i-3)^2}{(2i-7)(2i-3)(2i-5)} J^{0,1}_{i-4}(t)
+\frac{2(i-3))(i-4)}{(2i-5)(2i-7)} J^{0,1}_{i-5}(t),
\end{align*}
which lead to \eqref{phid1r} and \eqref{phid0r}, respectively.

Finally, \eqref{Aij} and \eqref{Bij} are direct consequences of 
\eqref{phid2}-\eqref{phi31} and \eqref{Jorth}. \endproof

\end{document}